\newtheorem{propriete}{Proposition}
\newtheorem{corollaire}{Corollary}
\def\w{{\bf w}}
\def\T{{\mathcal T}}
\def\N{{\mathbb N}}
\newcommand{\gras}[1]{{\em  #1}}
\newcommand{\TP}{\phi}
\def\myitem{\item[$\triangleright$]}
\def\bd{\blacklozenge}
\title{Dyck Tableaux}		
\author[labri]{Jean-Christophe Aval\fnref{fn1}} 
\author[labri]{Adrien Boussicault\fnref{fn1}}
\author[liafa]{Sandrine Dasse-Hartaut\fnref{fn2}}
\address[labri]{LaBRI, Universit\'e de Bordeaux, 351 cours de la Lib\'eration, 33405 Talence, France}
\address[liafa]{LIAFA, Universit\'e Diderot - Paris 7, Case 7014, 75205 Paris Cedex 13, France}
\date{\today}
\journal{Theoretical Computer Science}
\begin{document}

\begin{abstract}
We introduce and study new combinatorial objects called Dyck tableaux which may be seen as a variant of permutation tableaux. 
These objects appear in the combinatorial interpretation of the physical model PASEP (Partially Simple Asymmetric Exclusion Process).
Dyck tableaux afford a simple recursive structure through the construction of an insertion algorithm.
With this tool, we are able to describe statistics which are relevant in the PASEP model, in a more direct way than in previous works.
Moreover, we give a new and natural link between permutations and certain labeled Dyck paths known as subdivided Laguerre histories.
\end{abstract}

\maketitle

\tableofcontents

\section*{Introduction}

The starting point of this work is the discovery of a new and direct
construction that relies bijectively the permutations of length $n$ 
to some weighted Dyck paths named {\em subdivided Laguerre histories}.
These objects correspond to the combinatorial interpretation of the 
development of the generating function for factorial numbers in 
terms of a Stieltjes continued fraction \cite{ortho}. Such a bijection
has been given by de Medicis and Viennot \cite{dMV} but their construction
is indirect in the sense that it decomposes a permutation in two involutions,
then goes through the construction and the fusion
of two Hermite histories.

Another interest of our construction is that it gives a link between
subdivided Laguerre histories and
tree-like tableaux \cite{TLT}, which are a new presentation
of permutation tableaux \cite{PT,SW} or alternative tableaux \cite{AT}.
The link lies in the insertion algorithm used on both classes
of objects and whose key ingredient is the notion of {\em ribbon}.
For this reason, the central objects of this paper are tableaux
called {\em Dyck tableaux} whose natural reading in terms of words
gives subdivided Laguerre histories. Although the original construction
is not recursive, we are able to easily describe relevant statistics
(generalized patterns, shape, (RL/LR)-(minima/maxima))
because of the recursive structure given by the insertion procedure.

When talking about relevant statistics, we have in mind the long-term
challenging motivation of this work:  build new objects 
in order to give a new, and if possible simpler, interpretation 
of the statistics introduced by Corteel and Williams \cite{CW2} 
which describe the stationary state of the physical model named PASEP. 
The {\em Partially Asymmetric Simple Exclusion Process} is a model 
in which $N$ sites on a one-dimensional lattice are either empty
or occupied by a single particle. These particles may hop to the
left or to the right with fixed probalities, which defines
a Markov chain on the $2^N$ states of the model.
The explicit description of the stationary probability of the PASEP
was obtained through the Matrix-Ansatz \cite{derrida}.
Since then, the links between specializations of this model 
and combinatorics have been the subject of an important 
research (see for example \cite{DS,CW1,CN,MJV}).
A great achievment is the description of the stationary distribution
of the most general PASEP model through statistics defined on 
combinatorial objects called staircase tableaux \cite{CW2}. Although this result
gives an explicit solution, it would be valuable to give another,
and simpler interpretation.

This paper is divided into eight sections.
The first one is devoted to the definition of Dyck tableaux,
together with their word reading.
In Section \ref{sec:insertion},
we present the insertion algorithm which gives Dyck tableaux
their nice recursive structure. 
Section \ref{sec:bij-permutations} presents the direct bijection between
Dyck tableaux and permutations.
Section \ref{sec:patterns} studies some generalized permutation patterns.
The bijection with tree-like tableaux is presented in Section \ref{sec:bij-TLT}.
The next two sections deal with relevant parameters of Dyck tableaux:
their shape in Section \ref{sec:shape}, and (RL/LR)-(minima/maxima) 
in Section \ref{sec:min-max}. To conclude, the last section points out
open questions.

\section{Dyck tableaux}
\label{sec:definitions}

We shall call \gras{staircase partition} of size $n$ the partition 
$E_n=(n,n-1, \dots, 1)$. As usual, a partition is represented by a Ferrers 
diagram and Figure \ref{fig:partition_escalier} illustrates the convention
that we choose to draw such diagrams.
\begin{figure}[H]
$$
\begin{array}{c}
\mbox{\epsfig{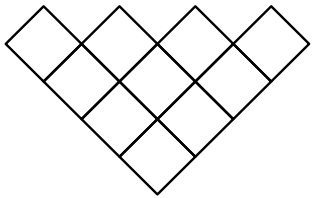}}
\end{array}
$$
\caption{The staircase partition $E_4$ \label{fig:partition_escalier}}
\end{figure}
The set of {\em Dyck paths} of size $n$ may be defined as  
$$
D_n = \{ E_n /  \mu \ | \  \mu \subset E_{n-1} \}
$$
where $/$ denotes the suppression of a partition box by box.
Figure \ref{fig:tableaux_de_dyck_3} shows the set of the $5$
Dyck paths of size $3$.
\begin{figure}[H]
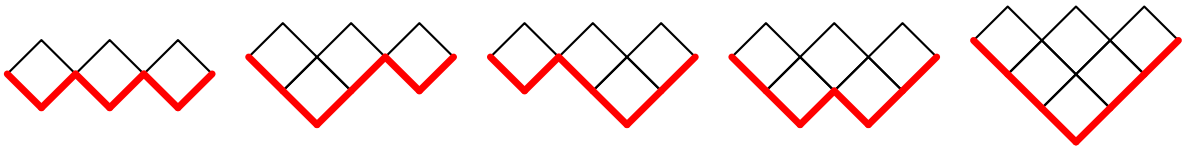

$$
\begin{array}{c}
\includegraphics[scale=0.9]{images/tableaux_de_dyck_3.1}
\end{array}
\begin{array}{c}
\includegraphics[scale=0.9]{images/tableaux_de_dyck_3.2}
\end{array}
\begin{array}{c}
\includegraphics[scale=0.9]{images/tableaux_de_dyck_3.3}
\end{array}
\begin{array}{c}
\includegraphics[scale=0.9]{images/tableaux_de_dyck_3.4}
\end{array}
\begin{array}{c}
\includegraphics[scale=0.9]{images/tableaux_de_dyck_3.5}
\end{array}
$$
\caption{The Dyck paths of size $3$\label{fig:tableaux_de_dyck_3}}
\end{figure}
It is more convenient for our purpose to use diagrams, but of course, 
the lower border of such a diagram $\pi$ is an "usual" Dyck paths. 
Moreover, when reading on the border of $\pi$
(from left to right) a $D$ letter for a step going down, and an $U$
letter for a step going up, we bijectively get {\em Dyck words}.
For example, the Dyck word associated to the second Dyck path $\pi$ 
on Figure \ref{fig:tableaux_de_dyck_3} is $D\, D\, U\, U\, D\, U$.
In a Dyck path $\pi$ of size $n$, the set of $n$ boxes placed at the top 
is called its \gras{first floor}. This notion is illustrated 
by Figure \ref{fig:base_du_tableaux_de_dyck}.
\begin{figure}[H]
$$
\begin{array}{c}
\includegraphics{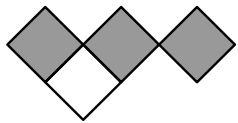}
\end{array}
$$
\caption{First floor of a Dyck path\label{fig:base_du_tableaux_de_dyck}}
\end{figure}
We label the boxes of the first floor from left to right,
and from $1$ to $n$.
The set of boxes of $\pi$ on the same vertical
as the $i$-th box of the first floor is called the
\gras{$i$-th column} of $\pi$, as shown by Figure \ref{fig:numerotation_des_colones}.
\begin{figure}[H]
$$
\begin{array}{c}
\includegraphics{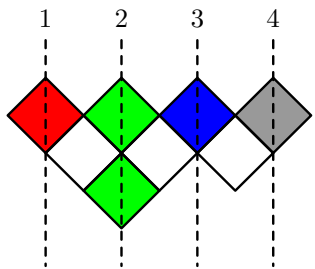}
\end{array}
$$
\caption{The labeling of columns\label{fig:numerotation_des_colones}}
\end{figure}
The number of boxes in a given column is called its \gras{height}; for
example, the column $2$ on Figure \ref{fig:numerotation_des_colones} 
is of height $2$.

Now we may define the central objects in our work.
A \gras{Dyck tableau} is a Dyck path whose columns contain each 
exactly one dot (dotted box).
The size of a Dyck tableau is its number of dots, which coincides
with the size of its underlying Dyck path.
Figure \ref{fig:tableau_de_dyck_value} is an example of Dyck tableau 
of size $6$ and Figure \ref{fig:tableaux_de_dyck_values_de_taille_3}
presents all the Dyck tableaux of size $3$.
\begin{figure}[H]
$$
\begin{array}{c}
\includegraphics{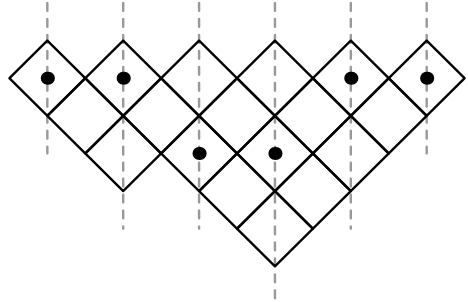}
\end{array}
$$
\caption{A Dyck tableau of size $6$\label{fig:tableau_de_dyck_value}}
\end{figure}
\begin{figure}[H]
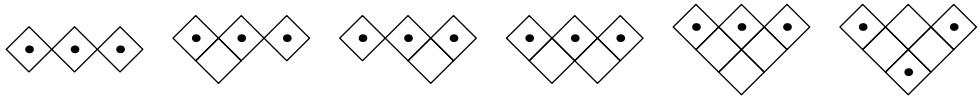

$$
\begin{array}{c}
\includegraphics[scale=0.6]{images/tableaux_de_dyck_values_de_taille_3.1}
\end{array}
\begin{array}{c}
\includegraphics[scale=0.6]{images/tableaux_de_dyck_values_de_taille_3.2}
\end{array}
\begin{array}{c}
\includegraphics[scale=0.6]{images/tableaux_de_dyck_values_de_taille_3.3}
\end{array}
\begin{array}{c}
\includegraphics[scale=0.6]{images/tableaux_de_dyck_values_de_taille_3.4}
\end{array}
\begin{array}{c}
\includegraphics[scale=0.6]{images/tableaux_de_dyck_values_de_taille_3.5}
\end{array}
\begin{array}{c}
\includegraphics[scale=0.6]{images/tableaux_de_dyck_values_de_taille_3.6}
\end{array}
$$
\caption{The Dyck tableaux of size $3$\label{fig:tableaux_de_dyck_values_de_taille_3}}
\end{figure}
The \gras{height of a dot} in a Dyck tableau is defined as the number
of empty boxes above it in the same column.
For example, on Figure \ref{fig:tableau_de_dyck_value}, the dot in 
column $2$ (resp. $3$) is at height $0$ (resp. $1$).
We need to define the \gras{basement} of size $n$ as the diagram:
$$
S_n = E_{n} / E_{n-1}.
$$
This notion is illustrated on Figure \ref{fig:socle}.
\begin{figure}[H]
$$
\begin{array}{c}
\includegraphics{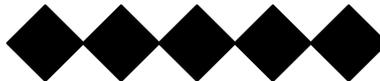}
\end{array}
$$
\caption{Basement of size $5$\label{fig:socle}}
\end{figure}
We shall now represent Dyck paths and Dyck tableaux of size $n$ 
with a basement $S_{n+1}$ whose boxes are colored in black,
as shown on Figure \ref{fig:tableau_de_dyck_value_reposant_sur_son_socle}.
\begin{figure}[H]
$$
\begin{array}{c}
\includegraphics{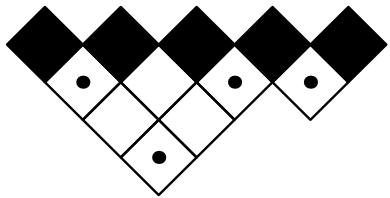}
\end{array}
$$
\caption{A Dyck tableau and its basement\label{fig:tableau_de_dyck_value_reposant_sur_son_socle}}
\end{figure}
%
It is sometimes convenient to use a word representation for Dyck tableaux.
A \gras{weighted Dyck word} is a word $w$ with letters in 
$\{\bd,U,D\}\cup \mathbb{N}$ such that:
\begin{itemize}
\myitem the word $w$ belongs to the language defined by
$$\left(\bd(U+D)\mathbb{N}(U+D)\right)^*\bd;$$
\myitem the sub-word in the letters $D$ and $U$ is a Dyck word;
\myitem for each position $i$,
$$
w(i) \in \mathbb{N} \ \ \Longrightarrow \ \ w(i) < ch(i,w)
$$
where $ch(i,w)$ is the {\em column height} defined by
$$
ch(i,w):= \left\lceil \frac{|\{ j<i | w(j)=D \}| - |\{ j<i | w(j)=U \}| }{2} \right\rceil .
$$
\end{itemize}
The integer entries of a weighted Dyck word are called its \gras{weights}. 
The same notion of weighted paths appears in \cite{dMV} under the name
{\em subdivided Laguerre histories}.
It should be clear that Dyck tableaux and weighted Dyck paths
are two representations of the same object, as illustrated
on Figure \ref{fig:bijection_tableaux_de_dyck_values_mots_de_dyck_values}:
to get a word from a Dyck tableau, one has to read the tableau (with its
basement) from left to right and to write:
\begin{itemize}
\myitem $\bd$ for a box of the basement;
\myitem $D$ for a down step of the border;
\myitem $U$ for an up step of the border;
\myitem $i$ for a dot at height $i$.
\end{itemize}
\begin{figure}[H]
$$
\begin{array}{c}
\includegraphics[scale=.8]{images/bijection_tableaux_de_dyck_values_mots_de_dyck_values.1}
\end{array}
$$
\caption{Bijection between Dyck tableaux and weighted Dyck paths\label{fig:bijection_tableaux_de_dyck_values_mots_de_dyck_values}}
\end{figure}

\section{Insertion procedure}
\label{sec:insertion}

The insertion procedure is the way to insert a dotted box inside a Dyck tableau, thus giving to the set of Dyck tableaux
a nice recursive structure.
In this section we define this insertion procedure and we present a  generation tree for Dyck tableaux.
The insertion procedure uses two main operations: column addition and ribbon addition.
Let us describe these two operations.

Let $T$ be a Dyck tableau and $w$ its reading as a weighted Dyck word.
We call \gras{column addition} in $w$ the substitution
$$
\bd \longrightarrow \bd D m U \bd
$$
where $m=ch(i,w)-1$ and $i$ is the position of the substituted $\bd$ letter in $w$.
For example, a column addition at the third $\bd$ letter in the word
$$
\bd D 0 D \bd D 1 D \textcolor{red}{\underline{{\bf \bd}}} U 0 U \bd U 0 D \bd U 0 U \bd
$$
gives the following word:
$$
\bd D 0 D \bd D 1 D \textcolor{red}{\underline{{\bf \bd D 2 U \bd }}} U 0 U \bd U 0 D \bd U 0 U \bd .
$$
Figure \ref{fig:inserer_colonne} illustrates this example on Dyck tableaux.
\begin{figure}[H]
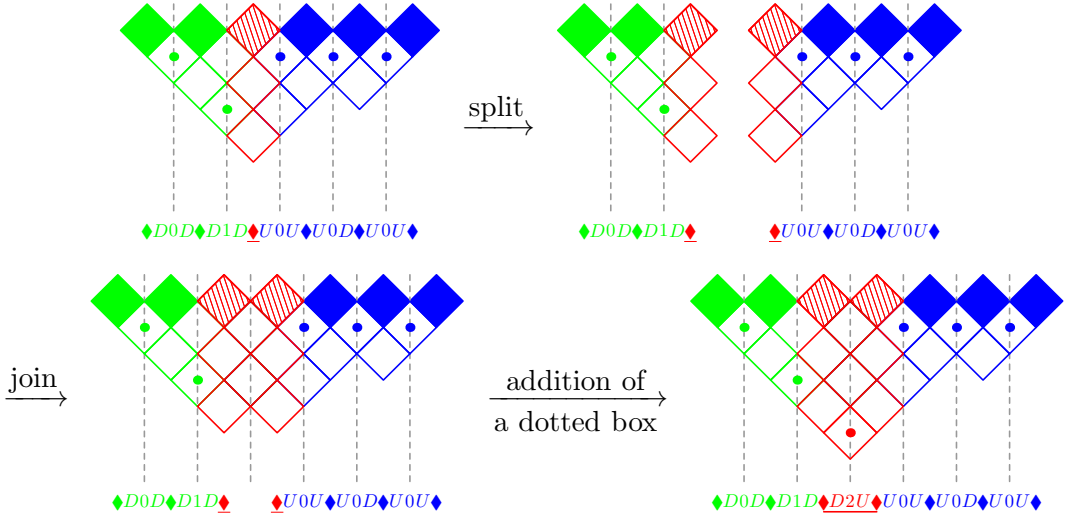

$$
\begin{array}{c}
\includegraphics[scale=.7]{images/inserer_colonne.1}
\end{array}
\xrightarrow{\mbox{split}}
\begin{array}{c}
\includegraphics[scale=.7]{images/inserer_colonne.2}
\end{array}
\begin{array}{c}
\includegraphics[scale=.7]{images/inserer_colonne.3}
\end{array}
$$
$$
\xrightarrow{\mbox{join}}
\begin{array}{c}
\includegraphics[scale=.7]{images/inserer_colonne.4}
\end{array}
\xrightarrow[\mbox{a dotted box}]{\mbox{addition of}}
\begin{array}{c}
\includegraphics[scale=.7]{images/inserer_colonne.5}
\end{array}
$$
\caption{Column addition\label{fig:inserer_colonne}}
\end{figure}

\begin{propriete}
\label{prop:insertion_colonne_mot_de_dey_value}
A column addition transforms a weighted Dyck word of size $n$ into a weighted Dyck word of size $n+1$.
\end{propriete}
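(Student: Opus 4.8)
The plan is to check, for the word $w'$ produced by the column addition, the three defining clauses of a weighted Dyck word, plus the size count. Write the input as $w = \blacklozenge X_1 \blacklozenge X_2 \cdots \blacklozenge X_n \blacklozenge$ with each $X_j \in (U+D)\mathbb{N}(U+D)$, and suppose the substituted $\blacklozenge$ is the $r$-th one, occurring at position $i$; then $w' = \blacklozenge X_1 \cdots \blacklozenge X_{r-1}\,\blacklozenge\,(D\,m\,U)\,\blacklozenge\,X_r \cdots \blacklozenge X_n \blacklozenge$, with the obvious reading when $r = n+1$. I would first dispose of the two easy clauses. Membership in $(\blacklozenge(U+D)\mathbb{N}(U+D))^*\blacklozenge$ is immediate, since we have simply inserted one more block, of content $D\,m\,U \in (U+D)\mathbb{N}(U+D)$, with $m \in \mathbb{N}$ (this is the point where one uses that the column addition is well-defined, i.e. $m = ch(i,w)-1 \ge 0$); and counting blocks, or weights, shows that $w'$ has size $n+1$.

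Next I would handle the ``Dyck word'' clause. The $D/U$-subword of $w'$ is obtained from the Dyck word underlying $w$ by inserting the factor $DU$ immediately after its first $2(r-1)$ letters (the steps coming from $X_1,\dots,X_{r-1}$). In the convention of Section~\ref{sec:definitions} a $D$ raises the height and a $U$ lowers it, so $DU$ is a matched pair, and inserting a matched pair anywhere into a Dyck word again gives a Dyck word: the partial sums before the insertion point are unchanged, those inside the inserted factor exceed it by $1$ and then by $0$, those after are unchanged, and the total stays balanced.

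The heart of the proof is the weight clause $w'(j) \in \mathbb{N} \Rightarrow w'(j) < ch(j,w')$, and the key observation is that the inserted factor $D\,m\,U$ carries exactly one $D$ and one $U$. Hence for every position lying to the right of the insertion, and trivially for every position lying to its left, the difference $|\{k<j : w'(k)=D\}| - |\{k<j : w'(k)=U\}|$ equals the corresponding difference in $w$; so each \emph{old} weight keeps its value of $ch$ and therefore still satisfies the required strict inequality. For the \emph{new} weight $m$, which in $w'$ sits just after the inserted $D$, that difference equals $\big(|\{k<i : w(k)=D\}| - |\{k<i : w(k)=U\}|\big) + 1$. At this point I would use that the parenthesised quantity is the height reached by the underlying Dyck word after the $2(r-1)$ steps of $X_1,\dots,X_{r-1}$, hence a nonnegative \emph{even} integer, so that by the definition of $ch$ it equals $2\,ch(i,w)$; it follows that $ch(j,w') = \lceil (2\,ch(i,w)+1)/2 \rceil = ch(i,w)+1 > ch(i,w)-1 = m$, which is exactly what is needed.

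I expect the only genuinely delicate step to be this last computation: one has to locate precisely the position of the new weight in $w'$ and then invoke the parity of $|\{k<i : w(k)=D\}| - |\{k<i : w(k)=U\}|$ so that the ceiling in the definition of $ch$ behaves as expected. By contrast, everything about the old weights is automatic once one notes that a matched pair $D\cdots U$ leaves every height to its right unchanged, and the language and size clauses are pure bookkeeping.
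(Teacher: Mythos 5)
Your proof is correct and follows essentially the same route as the paper's: verify the language clause by splitting $w$ at the substituted $\blacklozenge$, note that inserting the matched pair $DU$ preserves the Dyck property and leaves the heights and weights of all existing columns unchanged, and check the inequality for the new weight. The only difference is that you carry out explicitly the parity and ceiling computation for $ch$ at the position of the new weight, which the paper leaves implicit (its own statement of the inequality is terse about which word and which position the $ch$ is taken in), so your version is, if anything, slightly more careful on that point.
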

\begin{proof}
Let $w=w_1 \bd w_2$ be a weighted Dyck word and $w'=w_1 \bd D m U \bd w_2$ be the result of a column insertion in $w$.
Let $i$ be the position of $m$ in $w'$.
Let $L$ be the language $\left(\bd(U+D)\mathbb{N}(U+D)\right)^*\bd$.

Let us check that  $w'$ verifies the 3 properties defining weighted Dyck words:
\begin{itemize}
\myitem If $w_1 \bd w_2 \in L$, then, by a recursion argument, we get that $w_1 \bd \in L$ and that $\bd w_2 \in L$.
Hence, $w_1 \bd D m U \bd w_2 \in L$.
\myitem The insertion of $DU$ in a Dyck word does not change its nature.
Hence, the subword in $U$ and $D$ letters of $w'$ is a Dyck word.
\myitem 
The insertion of $DU$ does not modify the  weight and the height of the existing columns.
Moreover, the new weight is such that $m<ch(i,w)$ because $m=ch(i,w)-1$.
\end{itemize}
It is clear that the size of a weighted Dyck path is increased by $1$ in a column addition.
\end{proof}

Let $w$ be a weighted Dyck word.
The \gras{ribbon addition} on the subword $U\,D$ in $w$ is the operation exchanging the two letters $D$ and $U$  of the subword in $w$.

For example, the word
$$
\bd D 0 D \bd D 1 \textcolor{red}{\underline{{\bf U}}} \bd U 0 D \bd U 0 U \bd D 0 D \bd U 0 U \bd  \textcolor{red}{\underline{{\bf D}}} 0 U \bd
$$
has a subword $U\, D$ with a $U$ letter in the $8$-th position and a $D$ letter in the $26$-th position.
If we add a ribbon between these two letters, we obtain the word : 
$$
\bd D 0 D \bd D 1 \textcolor{red}{\underline{{\bf D}}} \bd U 0 D \bd U 0 U \bd D 0 D \bd U 0 U \bd  \textcolor{red}{\underline{{\bf U}}} 0 U \bd.
$$
Figure \ref{fig:ajouter_ruban} illustrates this example on Dyck tableaux.
\begin{figure}[H]
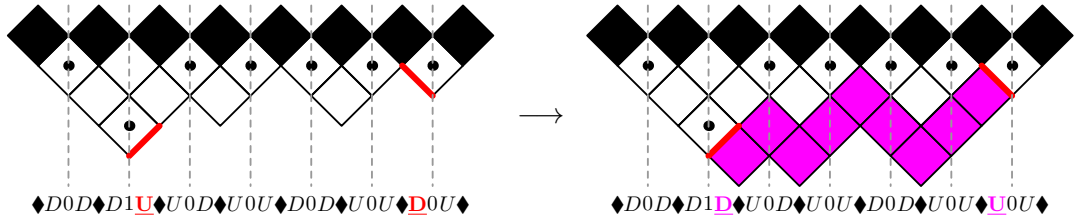

$$
\begin{array}{c}
\includegraphics[scale=.8]{images/ajouter_ruban.1}
\end{array}
\longrightarrow
\begin{array}{c}
\includegraphics[scale=.8]{images/ajouter_ruban.2}
\end{array}
$$
\caption{A ribbon addition \label{fig:ajouter_ruban}}
\end{figure}

\begin{propriete}
\label{prop:ajout_ruban_mot_de_dyck_value}
A ribbon addition transforms a weighted Dyck word into a weighted Dyck word of same size. 
\end{propriete}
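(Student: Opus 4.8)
I would argue exactly as in the proof of Proposition \ref{prop:insertion_colonne_mot_de_dey_value}, checking the three defining conditions of a weighted Dyck word one after the other, the guiding observation being that a ribbon addition merely exchanges two letters that both lie in $\{U,D\}$ and moves nothing else. Write $w=w_1\,U\,w_2\,D\,w_3$ for the decomposition at the chosen occurrence of the pattern $U\,D$, let $p<q$ be the positions of the swapped $U$ and $D$, and set $w'=w_1\,D\,w_2\,U\,w_3$. For the first condition, every position of $w'$ carries a $\bd$ (resp.\ a letter of $\{U,D\}$, resp.\ a letter of $\N$) precisely when the corresponding position of $w$ does; since membership in $L=(\bd(U+D)\N(U+D))^*\bd$ depends only on this pattern of position-types, $w\in L$ immediately gives $w'\in L$.

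For the remaining two conditions the key tool is the partial height $H(i,v):=|\{j<i\mid v(j)=D\}|-|\{j<i\mid v(j)=U\}|$, so that $ch(i,v)=\lceil H(i,v)/2\rceil$, and the $U/D$-subword of $v$ is a Dyck word exactly when $H(\cdot,v)\ge 0$ everywhere and $H$ vanishes past the end of $v$. Comparing the two words: the contribution of position $p$ passes from $-1$ to $+1$ and that of position $q$ from $+1$ to $-1$, whence $H(i,w')=H(i,w)$ for $i\le p$ and for $i>q$, while $H(i,w')=H(i,w)+2$ for $p<i\le q$. Thus $H(i,w')\ge H(i,w)\ge 0$ for all $i$ and the final value is still $0$, which settles the Dyck condition; moreover the integer letters of $w'$ sit at the same positions and with the same values as in $w$, so for such a position $i$ we get $w'(i)=w(i)<ch(i,w)=\lceil H(i,w)/2\rceil\le\lceil H(i,w')/2\rceil=ch(i,w')$, which settles the weight condition. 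Finally the multiset of letters of $w'$ equals that of $w$, so the size is preserved.

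The only point requiring care is the height comparison above, and this is exactly where the orientation of the pattern — a $U$ occurring \emph{before} a $D$ — is essential: exchanging them in that order can only raise every partial sum, whereas the opposite pattern would lower the partial sums and could destroy both the Dyck condition and a weight inequality $w'(i)<ch(i,w')$. Beyond keeping track of this monotonicity and of the behaviour of the ceiling function, I anticipate no real difficulty.
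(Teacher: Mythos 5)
Your proof is correct and follows essentially the same route as the paper's: the paper simply asserts that swapping a $U$ with a later $D$ keeps the $U/D$-subword a Dyck word, leaves the weights untouched, and raises by one the height of each column strictly between them, and you verify exactly these three facts (plus preservation of the language $L$) via the partial-sum function $H$. The only difference is that you supply the monotonicity computation the paper leaves implicit; no gap.
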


\begin{proof}
Let $w$ be a weighted Dyck word.
Let $U$ and $D$ be two letters of $w$ such that $U$ is placed before $D$ in $w$.
Permuting $U$ and $D$ in $w$
\begin{itemize}
\myitem transforms the underlying Dyck word in $U$ and $D$ of $w$ in another Dyck word;
\myitem does not change the weights of $w$;
\myitem just increases by $1$ the height of the columns placed between $U$ and $D$ in $w$.
\end{itemize}
Hence, the new word verifies all the conditions to be a weighted Dyck word.
\end{proof}

Before defining the insertion procedure, we need to define the special box of a Dyck tableau.
In a Dyck tableau, an \gras{eligible box} is a dotted box with no box to its South-West.
The \gras{special box} is the right-most eligible box.
Figure \ref{fig:case_speciale} gives an example of special box.
\begin{figure}[H]
$$
\begin{array}{c}
\includegraphics{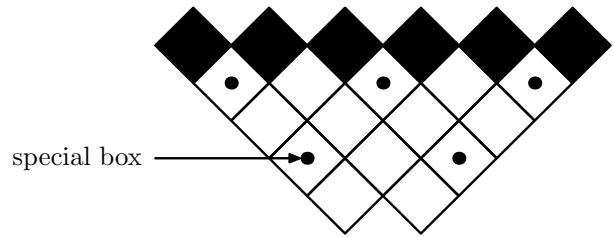}
\end{array}
$$
\caption{Special box of a Dyck tableau\label{fig:case_speciale}}
\end{figure}

In weighted Dyck words, an eligible box corresponds to a maximal weight with a $D$ letter just to its left.
We call these weights \gras{eligible}.
The special box corresponds to the right-most eligible weight, called the \gras{special weight}.
From now on, we shall use a $s$ letter to indicate a special box or a special weight.

\begin{propriete}
\label{prop:case_speciale}
A weighted Dyck word of size $k \ge 1$ has always a unique special weight.
\end{propriete}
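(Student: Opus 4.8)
The plan is to reduce the statement to a one-line observation about the leftmost weight. The assertion really has two parts: that at least one eligible weight exists, and that the special weight, being the right-most eligible weight, is then uniquely determined. The second part is automatic, since any non-empty finite set of positions in a word has exactly one right-most element; so the whole matter comes down to producing a single eligible weight, and I would do this by showing that the leftmost weight of a weighted Dyck word of size $k\ge 1$ is always eligible.

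Concretely, write the word along the language $\left(\bd(U+D)\mathbb{N}(U+D)\right)^*\bd$ as
$$
w=\bd\,a_1\,n_1\,b_1\,\bd\,a_2\,n_2\,b_2\,\bd\,\cdots\,a_k\,n_k\,b_k\,\bd,
$$
with $a_j,b_j\in\{U,D\}$ and $n_j\in\mathbb{N}$; since $k\ge 1$ the first block $a_1\,n_1\,b_1$ is present, and let $i$ denote the position of $n_1$. The only $U$ or $D$ letter occurring before position $i$ is $a_1$, hence $ch(i,w)=\lceil 1/2\rceil=1$ if $a_1=D$ and $ch(i,w)=\lceil -1/2\rceil=0$ if $a_1=U$. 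Because a weight must be a non-negative integer strictly smaller than the column height at its position, the case $a_1=U$ is impossible (it would force $n_1<0$); so $a_1=D$ and $n_1=0=ch(i,w)-1$. Thus $n_1$ is a maximal weight carrying a $D$ letter immediately to its left, i.e.\ it is eligible, and combined with the reduction of the first paragraph this would complete the proof.

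I do not expect a genuine obstacle: the argument is a short computation with the axioms of weighted Dyck words. The one point that needs a little care is the translation, already set up just before the statement, between the combinatorial description of eligibility (a dotted box with no box to its South-West) and the word-level one (a maximal weight preceded by $D$). As an independent check on the diagram side, the lower border of a Dyck path leaves the top-left corner by a down step, so the first column of a Dyck tableau consists of a single, necessarily dotted, box which has no box at all to its South-West; it is therefore always eligible, in agreement with the value $n_1=0$ found above.
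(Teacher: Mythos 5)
Your proof is correct and takes essentially the same route as the paper's: both reduce the claim to showing that the leftmost weight is eligible, the paper simply asserting that the word begins $\bd\, D\, 0$ with a first column of height $1$, while you additionally derive this from the constraint $w(i)<ch(i,w)$. Uniqueness is handled identically (the right-most element of a non-empty set of eligible weights).
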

\begin{proof}
A weighted Dyck word of size $k \ge 1$ has at least one weight.
Its first three letters are always : $\bd D 0$ and the height of the first column is $1$.
We deduce that the first weight is a eligible.
The uniqueness is obvious.
\end{proof}

We are now ready to present the insertion procedure.
The \gras{insertion procedure} is an algorithm to insert (in an invertible way)  a dotted box (and therefore a column) in a Dyck tableau.
This procedure is composed of four steps: 
\begin{algorithm}[H]
\caption{Insertion procedure}
\label{alg:algorithme_insertion}
\begin{algorithmic}[1]
\REQUIRE a weighted Dyck word of size $k\ge0$

If the size of the weighted Dyck word is 0 then ignore step \ref{etape_reperer_case_speciale} and \ref{etape_ajouter_ruban}.
\STATE \label{etape_reperer_case_speciale} Find the special weight $s$.
\STATE \label{etape_choisir_arete} Choose a $\bd$ letter.
\STATE \label{etape_inserer_colonne} Add a column at the position of the chosen $\bd$;
\STATE \label{etape_ajouter_ruban} If the chosen $\bd$ is to the left of $s$, perform a  ribbon addition to the $U$ letter following the new weight and the $D$ letter preceding $s$.
\ENSURE a final weighted Dyck word of size $k+1$.
\end{algorithmic}
\end{algorithm}

Figure \ref{fig:algorithme_insertion_tableau_de_dyck_value_ruban} gives a complete example of insertion procedure where we have to add a ribbon
at  step \ref{etape_ajouter_ruban}.
\begin{figure}[H]
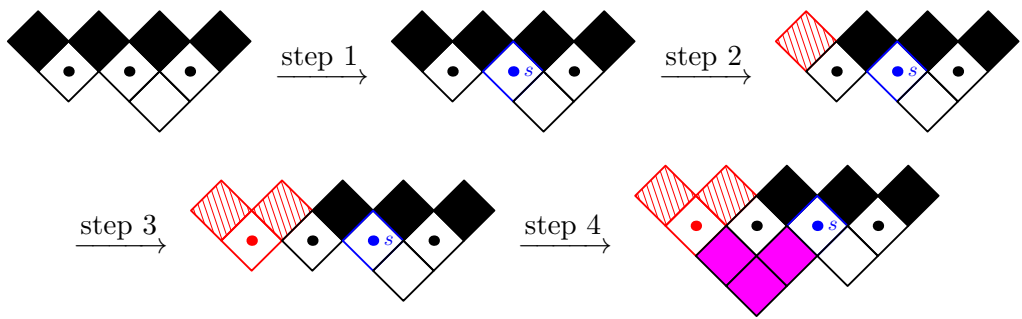

$$
\begin{array}{c}
\includegraphics[scale=.8]{images/algorithme_insertion_tableau_de_dyck_value.11}
\end{array}
\xrightarrow{\mbox{step \ref{etape_reperer_case_speciale}}}
\begin{array}{c}
\includegraphics[scale=.8]{images/algorithme_insertion_tableau_de_dyck_value.12}
\end{array}
\xrightarrow{\mbox{step \ref{etape_choisir_arete}}}
\begin{array}{c}
\includegraphics[scale=.8]{images/algorithme_insertion_tableau_de_dyck_value.13}
\end{array}
$$
$$
\xrightarrow{\mbox{step \ref{etape_inserer_colonne}}}
\begin{array}{c}
\includegraphics[scale=.8]{images/algorithme_insertion_tableau_de_dyck_value.14}
\end{array}
\xrightarrow{\mbox{step \ref{etape_ajouter_ruban}}}
\begin{array}{c}
\includegraphics[scale=.8]{images/algorithme_insertion_tableau_de_dyck_value.15}
\end{array}
$$
\caption{Insertion procedure \label{fig:algorithme_insertion_tableau_de_dyck_value_ruban}}
\end{figure}
Figure \ref{fig:algorithme_insertion_tableau_de_dyck_value} gives a complete example of insertion procedure where we do not have to add a ribbon during step \ref{etape_ajouter_ruban}.
\begin{figure}[H]
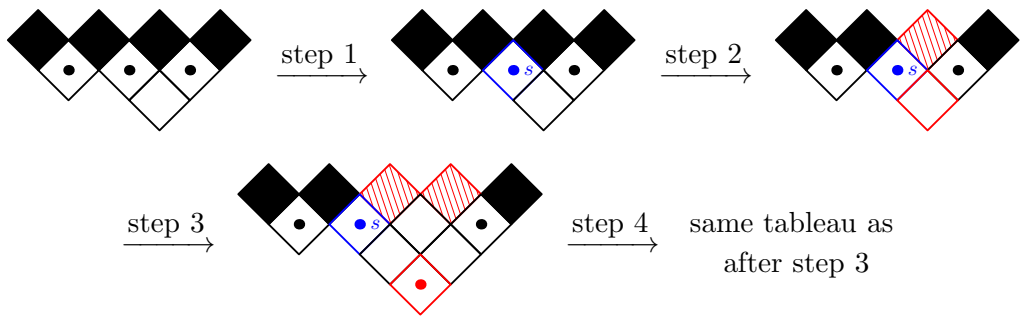

$$
\begin{array}{c}
\includegraphics[scale=.8]{images/algorithme_insertion_tableau_de_dyck_value.1}
\end{array}
\xrightarrow{\mbox{step \ref{etape_reperer_case_speciale}}}
\begin{array}{c}
\includegraphics[scale=.8]{images/algorithme_insertion_tableau_de_dyck_value.2}
\end{array}
\xrightarrow{\mbox{step \ref{etape_choisir_arete}}}
\begin{array}{c}
\includegraphics[scale=.8]{images/algorithme_insertion_tableau_de_dyck_value.3}
\end{array}
$$
$$
\xrightarrow{\mbox{step \ref{etape_inserer_colonne}}}
\begin{array}{c}
\includegraphics[scale=.8]{images/algorithme_insertion_tableau_de_dyck_value.4}
\end{array}
\xrightarrow{\mbox{step \ref{etape_ajouter_ruban}}}
\hspace{.25cm} { \mbox{same tableau as } \atop \mbox{after step 3} }
$$
\caption{Insertion procedure \label{fig:algorithme_insertion_tableau_de_dyck_value}}
\end{figure}

\begin{proof}[Insertion algorithm validity]
Let $w$ be a weighted Dyck word of size $n \ge 0$. The case $n=0$ is obvious, thus we suppose $n\ge 1$.
Step \ref{etape_reperer_case_speciale} is correct because there is always a special weight in a weighted Dyck word (Proposition \ref{prop:case_speciale}).
By definition, a weighted Dyck word has at least one $\bd$ letter, whence step \ref{etape_choisir_arete} is valid, 
and step \ref{etape_inserer_colonne} can be performed.
Thanks to Proposition \ref{prop:insertion_colonne_mot_de_dey_value}, the result of step \ref{etape_inserer_colonne} 
is  a weighted Dyck word of size $n+1$.

From the column addition, we know that the letter following the new weight is a $U$,
and from the definition of the special weight, we know that the letter preceding of $s$ is a $D$.
Now, if the new weight is on the left of $s$, we get a subword $U\,D$ on which we can add a ribbon.
We deduce that  step \ref{etape_ajouter_ruban} is well defined.

Because of Proposition \ref{prop:ajout_ruban_mot_de_dyck_value}, we conclude that the output of the algorithm 
is a weighted Dyck word of size $n+1$.
\end{proof}

The next proposition is a key ingredient.
\begin{propriete}
\label{prop:nouvelle_case_egal_case_speciale}
At the end of the insertion procedure, the new weight becomes the special weight.
\end{propriete}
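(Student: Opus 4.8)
The plan is to show that, in the weighted Dyck word $w'$ produced by the insertion procedure, the new weight is \emph{eligible} and that no eligible weight of $w'$ lies strictly to its right; by definition of the special weight this forces the new weight to be the special one. Throughout I use the word-level characterisation recalled in the text: a weight is eligible exactly when it is maximal, i.e. equal to its column height minus $1$, and is immediately preceded by a $D$ letter. The case $n=0$ is immediate, since the output $\bd D 0 U \bd$ has its unique weight maximal and preceded by $D$.

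First I would check that the new weight $m$ is eligible in $w'$. The column addition of step~\ref{etape_inserer_colonne} creates a block $\bd D m U \bd$ in which $m$ equals the height of the new column minus $1$, hence is maximal, and in which $m$ is immediately preceded by a $D$. It then remains to see that the optional ribbon addition of step~\ref{etape_ajouter_ruban} preserves both facts. This ribbon swaps the $U$ that directly follows the new weight with the $D$ that directly precedes $s$; both of these positions lie strictly to the right of $m$, so the letter immediately before $m$ is untouched (still a $D$) and, by the analysis in the proof of Proposition~\ref{prop:ajout_ruban_mot_de_dyck_value}, the height of the new column --- which depends only on the letters to its left --- is unchanged. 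Since a ribbon addition never modifies a weight, $m$ stays maximal, so $m$ is eligible in $w'$.

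Next I would prove that no eligible weight of $w'$ lies to the right of $m$, splitting according to step~\ref{etape_ajouter_ruban}. If the chosen $\bd$ is to the right of $s$, no ribbon is performed, $m$ lies to the right of $s$, and every column of $w'$ lying to the right of $m$ is an old column of $w$ lying to the right of $s$; by Proposition~\ref{prop:insertion_colonne_mot_de_dey_value} the column addition alters neither the weights, nor the heights, nor the neighbouring steps of such columns, and none of them was eligible in $w$ because $s$ was the right-most eligible weight of $w$; hence none is eligible in $w'$. If instead the chosen $\bd$ is to the left of $s$, the ribbon of step~\ref{etape_ajouter_ruban} swaps the $U$ following $m$ with the $D$ preceding $s$; by the proof of Proposition~\ref{prop:ajout_ruban_mot_de_dyck_value} this increases by $1$ the height of every column strictly between the new column and the column of $s$ (so any such column that was eligible ceases to be maximal), while it replaces the $D$ preceding $s$ by a $U$ (so $s$ itself ceases to be eligible, no other step immediately before a weight being affected), and it leaves every column to the right of $s$ untouched (none of which was eligible in $w$, by maximality of $s$). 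In both cases every weight of $w'$ strictly to the right of $m$ fails to be eligible, so $m$ is the right-most eligible weight of $w'$, that is, its special weight.

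The only delicate point is the bookkeeping in the case where a ribbon is added: one must use precisely that the ribbon raises by one the heights of the "in-between" columns (thereby destroying their maximality) and flips the step preceding $s$ from $D$ to $U$ (thereby destroying its eligibility), while leaving the new column and everything to the right of $s$ unaffected; once this is granted, the conclusion follows at once from Propositions~\ref{prop:insertion_colonne_mot_de_dey_value}, \ref{prop:ajout_ruban_mot_de_dyck_value} and \ref{prop:case_speciale} together with the definition of eligibility.
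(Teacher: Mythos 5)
Your proof is correct and follows essentially the same route as the paper's: show the new weight $m$ is eligible after the column (and possible ribbon) addition, then argue by the same two cases ($m$ right of $s$, $m$ left of $s$) that no eligible weight remains to its right. You are in fact slightly more careful than the paper on one point, namely that in the ribbon case the weight $s$ itself ceases to be eligible because the $D$ preceding it is flipped to a $U$ --- a detail the paper's proof leaves implicit.
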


\begin{proof}
Let $w$ be a Dyck word.
We may write  $w=w_1 s w_2$, with  $w_1$ and $w_2$ two words and $s$ the special weight of $w$.
Let $w'$ be the result of the insertion procedure applied to $w$.
Step \ref{etape_inserer_colonne} consists in a column addition.
When we add a column, we substitute a $\bd$ letter in $w$ by the word $\bd D m U \bd$ where $m=ch(i,w)-1$ is the new weight.
The new weight is maximal and have a $D$ letter to its left.
Thus the new weight $m$ is eligible.

We want to prove now that $m$ is the right-most eligible weight.
Two cases have to be considered:
\begin{enumerate}[1)]
\item  $m$ is to the right of $s$: as $s$ is the special weight in $w$, there is no eligible weight to its right,
whence no eligible weight to the right of $m$;
\item  $m$ is to the left of  $s$: a ribbon is added between $m$ and $s$ at step  \ref{etape_ajouter_ruban}.
Columns between $m$ and $s$ have their height increased by $1$ and their weight unchanged.
Hence, all the weights between $m$ and $s$ are not maximal.
As $s$ is the special weight in $w$, the weights placed to its right are not eligible.
We deduce that there is no eligible weight to the right of $m$.
\end{enumerate}
We conclude that $m$ is the special weight.
\end{proof}

This insertion would be of no use if it could not be inversed.
We shall now describe the \gras{inverse insertion procedure}, which consists in three steps: 
\begin{algorithm}[H]
\caption{Inverse insertion procedure}
\label{alg:algorithme_insertion_inverse}
\begin{algorithmic}[1]
\REQUIRE a weighted Dyck word of size $k \ge 1$
\STATE \label{etape_inverse_repere_case_speciale} Find the special weight $s$.
\STATE \label{etape_inverse_suprimmer_ruban} If the letter following  $s$ is a $D$, then find the leftmost maximal weight $m$ 
placed to the right of $s$.
The letter preceding  $m$ has to be a $U$. 
Delete the ribbon between $D$ and $U$ by permuting these two letters.
\STATE \label{etape_inverse_suprimer_colonne} Delete the column of $s$ : substitute the factor $\bd D s U \bd$ by the $\bd$ letter.
\ENSURE a final valued Dyck word of size $k-1$.
\end{algorithmic}
\end{algorithm}

\begin{proof}[Inverse insertion procedure validity]
Let $w$ be a weighted Dyck word of size $k \geq 1$.
We will now justify the three steps of the algorithm:
\begin{enumerate}[1)]
\item Step \ref{etape_inverse_repere_case_speciale} is correct because there is always a special weight in $w$ (Proposition \ref{prop:case_speciale}).
\item We may suppose that the the letter following  $s$ is $D$,  since the other case is trivial.
We check successively: that we can perform step \ref{etape_inverse_suprimmer_ruban}
and that it gives a weighted Dyck word of size $k$.
\begin{enumerate}[i)] 
\item To perform  step \ref{etape_inverse_suprimmer_ruban}, we need to find the first maximal weight $m$ placed to the right of $s$.
This weight exists since the rightmost weight of a weighted Dyck word is always maximal and can not be special 
(the letter just to its right is a $U$).
The letter preceding $m$ is a $U$: otherwise $m$ would be an eligible weight placed to the right of  $s$.
Thus we can apply  step \ref{etape_inverse_suprimmer_ruban}.
\item By performing step \ref{etape_inverse_suprimmer_ruban}, we permute the letters $D$ and $U$ to obtain a new word $w'$.
Let $i$ and $j$ be the position of $s$ and $m$ in $w$.
We know that $m$ is the leftmost maximal weight placed to the right of $s$.
We deduce that
$$
\forall k \in ]i,j[, \ \ w(k) \in \mathbb{N} \Rightarrow w(k) \le ch(k,w)-2.
$$
When we permute the letters $D$ and $U$, we decrease by $1$  the height of all the columns placed between $s$ and $m$.
Thus, the new word $w'$ has the following properties:
\begin{itemize}
\myitem the weights between $s$ and $m$ are strictly smaller than the height of their columns;
\myitem the subword in $U$ and $D$ of $w'$ is a Dyck word.
\end{itemize}
We deduce that the result of step \ref{etape_inverse_suprimmer_ruban} is a weighted Dyck word $w'$ of size $k$.
\end{enumerate}
\item 
By definition of the special weight,  the letter preceding $s$ in $w$ (thus in $w'$) is a $D$.
In the two cases of step \ref{etape_inverse_suprimmer_ruban}, the  letter following $s$ in $w'$  is a $U$.
We deduce that $\bd D s U \bd$ is a factor of $w'$.
We can perform step \ref{etape_inverse_suprimer_colonne} and substitute the factor $\bd D s U \bd$ by a $\bd$ letter
to get a word $w''$.
This substitution does not modify the height or the weight of the columns.
Moreover, deleting a factor $DU$ in a Dyck word gives another Dyck word.
We can conclude that $w''$ is a weighted Dyck word of size $k-1$.
\end{enumerate}
\end{proof}

\begin{propriete}
\label{prop:inverse_de_algorithme_d'insertion}
The inverse insertion procedure is the inverse of the insertion procedure.
\end{propriete}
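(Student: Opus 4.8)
Since the insertion procedure involves a choice of $\bd$ letter at Step~\ref{etape_choisir_arete}, the statement should be read as: the insertion procedure, seen as a map sending a pair (a weighted Dyck word, a distinguished $\bd$ letter of it) to a weighted Dyck word of size $\ge1$, is a bijection whose inverse is the inverse insertion procedure, the distinguished $\bd$ being recovered as the spot where the deleted column sat. So the plan is to prove two things: (i) for every weighted Dyck word $w$ and every choice of $\bd$, running the inverse procedure on the output of the insertion procedure returns $w$ together with that $\bd$; (ii) for every weighted Dyck word $w'$ of size $\ge1$, running the insertion procedure on the output of the inverse procedure, choosing the $\bd$ where the column was removed, returns $w'$. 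The size-$0$ case being immediate, I assume sizes $\ge1$. The pivotal tool in both parts is Proposition~\ref{prop:nouvelle_case_egal_case_speciale}: the newly created weight is the special weight of the word produced, so Step~\ref{etape_inverse_repere_case_speciale} of the inverse procedure, which begins by locating the special weight, automatically points to the column to be deleted.

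For Part (i), write $w=w_1\,\bd\,w_2$ with $\bd$ the chosen letter and $s_0$ the special weight of $w$; after Step~\ref{etape_inserer_colonne} the word is $w_1\,\bd D m U \bd\,w_2$, and by Proposition~\ref{prop:nouvelle_case_egal_case_speciale} the weight $m$ becomes the special weight of the final word $w'$. The key remark is that in $w'$ the letter immediately following $m$ is a $U$ precisely when no ribbon was added at Step~\ref{etape_ajouter_ruban}, and a $D$ precisely when a ribbon was added (the ribbon moves that $U$ away and slides the $D$ preceding $s_0$ into its place); hence the test in Step~\ref{etape_inverse_suprimmer_ruban} recognises which case occurred. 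If no ribbon was added, Step~\ref{etape_inverse_suprimmer_ruban} is vacuous and Step~\ref{etape_inverse_suprimer_colonne} rewrites $\bd D m U \bd$ as $\bd$, giving $w$. If a ribbon was added, I would argue exactly as in the proof of Proposition~\ref{prop:nouvelle_case_egal_case_speciale} that $s_0$ is the leftmost \emph{maximal} weight to the right of $m$ in $w'$ (the weights strictly between $m$ and $s_0$ were made non-maximal by the height increase, while the column of $s_0$ is untouched because interchanging a $U$ and a later $D$ both lying before $s_0$ leaves the number of $D$'s minus $U$'s before $s_0$ unchanged); hence Step~\ref{etape_inverse_suprimmer_ruban} swaps back precisely the $D$ following $m$ and the $U$ preceding $s_0$, after which Step~\ref{etape_inverse_suprimer_colonne} restores $w$.

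For Part (ii), let $w'$ have size $k\ge1$ and special weight $s$. The inverse procedure produces, after a possible ribbon deletion at Step~\ref{etape_inverse_suprimmer_ruban}, a word $\widetilde w'=u_1\,\bd D s U \bd\,u_2$, and then $w=u_1\,\bd\,u_2$. Re-running the insertion procedure on $w$ with the $\bd$ between $u_1$ and $u_2$, the column addition at Step~\ref{etape_inserer_colonne} inserts a new weight at the very position $s$ had in $\widetilde w'$, preceded by the same factor $u_1\,\bd\,D$; since in both situations that weight equals its column height minus one and the two columns have the same height, the new weight is $s$, so the word after Step~\ref{etape_inserer_colonne} is again $\widetilde w'$. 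It remains to check that Step~\ref{etape_ajouter_ruban} adds a ribbon exactly when the inverse procedure had removed one, and the same ribbon. If the letter after $s$ in $w'$ was a $U$, no ribbon was removed, $s$ is the rightmost eligible weight of $w'$, and — the eligibility of the surviving weights being unchanged by the deletion of $\bd D s U \bd$ — every eligible weight of $w$ lies in $u_1$; so the special weight of $w$ is to the left of the chosen $\bd$, Step~\ref{etape_ajouter_ruban} is vacuous, and $w'$ is recovered. If the letter after $s$ in $w'$ was a $D$, a ribbon between the $D$ following $s$ and the $U$ preceding the leftmost maximal weight $m'$ to the right of $s$ had been removed; I would then show that in $\widetilde w'$ the weight $m'$ regains a $D$ as its left neighbour, keeps its height, hence is eligible, and is its rightmost eligible weight, so that the special weight of $w$ is $m'$, which lies in $u_2$; thus Step~\ref{etape_ajouter_ruban} does add a ribbon, and it is the one between the $U$ following the new weight and the $D$ preceding $m'$ — precisely the two letters that had been interchanged, so $w'$ is restored. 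Uniqueness of the pair (word, chosen $\bd$) then follows from Part~(i).

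The step I expect to fight with is the bookkeeping in Part~(ii) when a ribbon has to be re-inserted: deleting the ribbon lowers by one the heights of the columns lying strictly between $s$ and $m'$, so a priori some of them could become maximal, and one must be sure that $m'$ — not one of those columns — is the special weight of $w$, so that Step~\ref{etape_ajouter_ruban} re-inserts exactly the ribbon that was removed. Since those in-between columns keep their left neighbours and in any case lie to the left of $m'$, while nothing to the right of $m'$ is altered, $m'$ does stay the rightmost eligible weight; making this airtight is the same delicate height analysis already carried out in the validity proof of the inverse procedure, and I would reuse the inequality $w(k)\le ch(k,w)-2$ for positions $k$ strictly between $s$ and $m'$ established there.
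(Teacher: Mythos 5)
Your argument is correct and follows essentially the same route as the paper: both directions hinge on Proposition~\ref{prop:nouvelle_case_egal_case_speciale} together with the same two-case analysis (ribbon added or not) and the same height bookkeeping identifying $s_0$ (resp.\ $m'$) as the leftmost maximal weight to the right of the new (resp.\ special) weight. The only difference is one of emphasis: the paper works out the insert-then-invert direction in full and dismisses the converse with ``we prove in a similar way'', whereas you spell out that converse, including the genuinely delicate point that intermediate columns may become maximal after the ribbon deletion yet still lie to the left of $m'$.
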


\begin{proof}
Let $w$ be a weighted Dyck word and $p$ a $\bd$ letter of $w$. 
We denote by $w'$ the result of the insertion procedure applied to letter $p$ in $w$.
We want to check that the inverse insertion procedure appied to $w'$ gives back $w$.
Two cases have to be distinguished, according to the place of $s$, the special weight of $w$:
\begin{description}
\item{{\it Case 1:}} {\it $p$ is to the left of $s$.}\\
In this case, we may write $w=w_1 p w_2 D s w_3$, with $w_1$, $w_2$ and $w_3$ factors of $w$.
Thus $w'=w_1 \bd D m D \bd w_2 U s w_3$ where $m$ is the inserted weight.
We now apply the inverse insertion procedure to $w'$.
Thanks to Proposition \ref{prop:nouvelle_case_egal_case_speciale}, the special weight of $w'$ is $m$.
Since there is a $D$  following $m$, we have to perform step \ref{etape_inverse_suprimmer_ruban} and remove a ribbon.
To do this, we first search for the leftmost maximal weight placed to the right of $m$.
This maximal weight is $s$: indeed, by permuting $U$ and $D$ during the insertion procedure, we have  increased the height 
of all the columns between $m$ and $s$ and leave their weight unchanged.
By removing the ribbon,
we get the word $w_1 \bd D m U \bd w_2 D s w_3$.
At step \ref{etape_inverse_suprimer_colonne}, we replace the factor $\bd D m U \bd$ by a $\bd$ letter.
We finally get $w_1 \bd w_2 D s w_3$ which is precisely $w$.
\item{{\it Case 2:}} {\it $p$ is to the right of $s$.}\\
In this case, we may write $w=w_1 s w_2 p w_3$, with $w_1$, $w_2$ and $w_3$  factors of $w$.
We get  $w'=w_1 s w_2 \bd D m U \bd w_3$ where $m$ is the inserted weight.
We now apply the inverse insertion procedure to $w'$.
Using Proposition \ref{prop:nouvelle_case_egal_case_speciale}, we select the weight $m$ at step \ref{etape_inverse_repere_case_speciale}.
At  step \ref{etape_inverse_suprimmer_ruban}, there is nothing to do, because the letter following $m$ is a $U$.
At  step \ref{etape_inverse_suprimer_colonne} we replace the factor $\bd D m U \bd$ by a letter $\bd$.
We finally get $w_1 s w_2 \bd w_3$ which is precisely $w$.
\end{description}
If we apply the inverse insertion procedure to a weighted Dyck word $w$ of size $n$,
we get  a Dyck path $w'$ of size $n-1$ and an integer $0\le k\le n-1$ (the place of the special weight of $w$).
We prove in a similar way that if we apply the insertion  procedure to $w'$ at place $k$, we get back $w$.
\end{proof}

\begin{propriete}
\label{prop:generation_mot_de_dyck_value}
Every weighted Dyck word $w$ may be constructed from $\bd$ (the word of size $0$), recursively using the insertion procedure.
\end{propriete}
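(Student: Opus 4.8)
The plan is to argue by induction on the size $n$ of $w$. For $n=0$ there is nothing to prove: $w=\bd$ is already the word we start from. So I would assume $n\ge 1$ and that every weighted Dyck word of size $n-1$ can be obtained from $\bd$ by a sequence of insertion procedures.

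First I would apply the inverse insertion procedure to $w$. By the validity of that algorithm (with Proposition \ref{prop:case_speciale} guaranteeing that the special weight exists at step~\ref{etape_inverse_repere_case_speciale}), this is well defined and produces a weighted Dyck word $w'$ of size $n-1$. Along the way the algorithm also singles out a distinguished position in $w'$, namely the position $k$ of the $\bd$ letter that results from collapsing the factor $\bd D s U \bd$ at step~\ref{etape_inverse_suprimer_colonne} — equivalently, the place where the special weight of $w$ used to sit. By the induction hypothesis, $w'$ is reachable from $\bd$ by repeated insertions.

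It then remains to check that one more insertion recovers $w$: applying the insertion procedure to $w'$, choosing at step~\ref{etape_choisir_arete} the $\bd$ letter at position $k$, yields $w$. This is exactly the content of Proposition \ref{prop:inverse_de_algorithme_d'insertion}, whose proof shows that feeding back to the insertion procedure the word together with the marked position produced by the inverse insertion procedure returns the original word. Concatenating the sequence of insertions producing $w'$ with this last insertion gives a construction of $w$ from $\bd$, completing the induction.

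I do not expect a real obstacle here: all the substantive work has already been done in establishing that the insertion procedure is valid (it produces a word of size $n+1$), that the inverse insertion procedure is valid (it produces a word of size $n-1$), and that the two are mutually inverse. The only points requiring a word of care are that the inverse insertion \emph{strictly} decreases the size — so that the induction is well founded and terminates at $\bd$ — and that one must retain the position returned by the inverse procedure in order to know where to re-insert; both are immediate from the statements already proved.
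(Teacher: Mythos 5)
Your proof is correct and follows exactly the paper's argument: induction on the size, applying the inverse insertion procedure to obtain a smaller weighted Dyck word, and invoking Proposition \ref{prop:inverse_de_algorithme_d'insertion} to recover $w$ by one further insertion. The extra care you take about recording the position of the collapsed $\bd$ letter is a sensible (if implicit in the paper) detail, but the approach is the same.
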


\begin{proof}
We shall prove the result by induction on the size of $w$.
It is obvious if the size is $0$, and we assume it holds up to size $k$.
Let $w$ be a weighted Dyck word of size $k+1$.
If we use the inverse insertion procedure on $w$, we obtain a weighted Dyck word $w'$ of size $k$.
By induction, we know that $w'$ may be obtained from the word $\bd$ using the insertion procedure.
Thanks to Proposition \ref{prop:inverse_de_algorithme_d'insertion}, $w$ can be constructed by applying the insertion algorithm to $w'$.
\end{proof}

The generation tree for Dyck tableaux of size at most $3$ is shown on Figure \ref{fig:generation_tableau_dyck}.
In this tree, an arrow of label $i$ links a tableau $T$ to a tableau $T'$ obtained by inserting a dot in the $i$-th box of the basement of $T$.
\begin{figure}[H]
$$
\begin{array}{c}
\includegraphics[scale=.6]{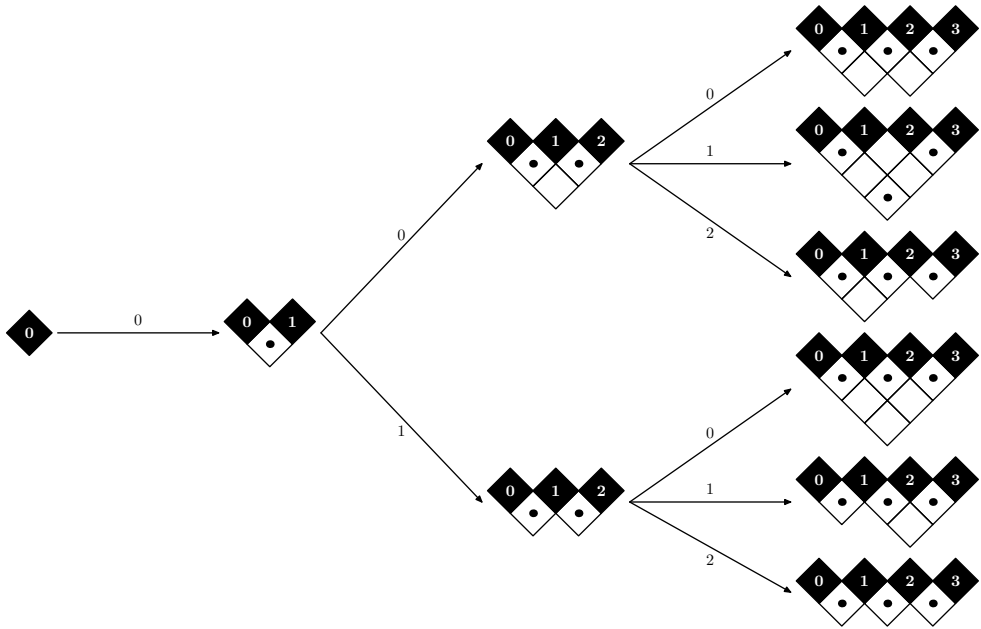}
\end{array}
$$
\caption{Generation tree for  Dyck tableaux of size at most $3$\label{fig:generation_tableau_dyck}}
\end{figure}

\begin{propriete}
The number of weighted Dyck tableaux of size $n$ is  $n!$.
\end{propriete}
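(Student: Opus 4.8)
The plan is to promote the insertion procedure of Section~\ref{sec:insertion} to a bijective recurrence. Write $d_n$ for the number of weighted Dyck words of size $n$; by the identification of Dyck tableaux with weighted Dyck words established in Section~\ref{sec:definitions}, $d_n$ is also the number of Dyck tableaux of size $n$, and the goal is to show $d_n=n!$. The base case is clear: $d_0=1$, the unique word being $\bd$.

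First I would count the $\bd$ letters of a weighted Dyck word of size $n-1$. Such a word lies in the language $\left(\bd(U+D)\mathbb{N}(U+D)\right)^*\bd$, and its size equals the number of $\mathbb{N}$-letters, hence the number of iterations of the starred block. A word obtained with $k$ iterations has $k$ blocks, each contributing one $\bd$, plus the trailing $\bd$, so a weighted Dyck word of size $n-1$ has exactly $n$ occurrences of $\bd$.

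Next I would show that the map
$$
\Phi \colon \{(w',p) : w' \text{ a weighted Dyck word of size } n-1,\ p \text{ a } \bd \text{ letter of } w'\} \longrightarrow \{\text{weighted Dyck words of size } n\}
$$
sending $(w',p)$ to the output of the insertion procedure performed at the chosen $\bd$ letter $p$ is a bijection. It is well defined into words of size $n$ by the validity of Algorithm~\ref{alg:algorithme_insertion} (Propositions~\ref{prop:insertion_colonne_mot_de_dey_value} and~\ref{prop:ajout_ruban_mot_de_dyck_value}). It is surjective: given a weighted Dyck word $w$ of size $n$, the inverse insertion procedure (Algorithm~\ref{alg:algorithme_insertion_inverse}) produces a weighted Dyck word $w'$ of size $n-1$, and, reading off the single $\bd$ letter of $w'$ that replaces the deleted factor $\bd D s U \bd$ in step~\ref{etape_inverse_suprimer_colonne}, a distinguished $\bd$ letter $p$ of $w'$; by Proposition~\ref{prop:inverse_de_algorithme_d'insertion}, $\Phi(w',p)=w$. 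It is injective for the same reason: if $\Phi(w_1',p_1)=\Phi(w_2',p_2)$, then applying the inverse insertion procedure to this common word recovers, by Proposition~\ref{prop:inverse_de_algorithme_d'insertion}, both $(w_1',p_1)$ and $(w_2',p_2)$, so they coincide. Grouping the pairs in the domain of $\Phi$ by their first coordinate and invoking the previous paragraph, the domain has cardinality $n\cdot d_{n-1}$, whence $d_n=n\cdot d_{n-1}$; combined with $d_0=1$ this gives $d_n=n!$ by induction on $n$.

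The main technical point — and precisely where the earlier results of this section are needed — is the careful bookkeeping in the bijectivity of $\Phi$: one must make sure that the $\bd$ letter \emph{chosen} on the way in is exactly the one \emph{read off} on the way out, i.e.\ that it sits at the column of the special weight of the output word. This is the content of Proposition~\ref{prop:nouvelle_case_egal_case_speciale} together with Proposition~\ref{prop:inverse_de_algorithme_d'insertion}; once those are available, the enumeration is immediate and no further computation is required.
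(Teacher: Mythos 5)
Your proposal is correct and follows essentially the same route as the paper: the paper's proof simply observes that the insertion procedure gives a bijection between pairs $(T,k)$, with $T$ of size $n$ and $k$ one of the $n+1$ possible insertion positions, and Dyck tableaux of size $n+1$. You have merely spelled out the details (the count of $\bd$ letters and the appeal to Propositions \ref{prop:nouvelle_case_egal_case_speciale} and \ref{prop:inverse_de_algorithme_d'insertion} for bijectivity) that the paper leaves implicit.
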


\begin{proof}
It follows easily from the fact that the insertion procedure induces a bijection between couples
$(T,k)$, where $T$ is a Dyck tableau of size $n$ and $k$ is an integer with $0\le k\le n$,
and Dyck tableau $T'$ of size $n+1$.
\end{proof}

\section{Bijection between Dyck tableaux and permutations}
\label{sec:bij-permutations}

The insertion procedure can be used to construct a template of bijections between permutations and Dyck tableaux.
To obtain this template, to each tableau $T$ of size $n$, we label all the boxes of the basement from left to right by integers from $0$ to $n$.
Then, during the insertion procedure, we record in a table the labels of the chosen boxes.
We call this table the \gras{history table} of $T$.
For example, Table \ref{table:histoy_table} is the history table of the Dyck tableau of 
Figure \ref{fig:tableau_de_dyck_value_table}.
\begin{table}[H]
\begin{center}
\begin{tabular}{|c|c|c|c|c|c|}
\hline
insertion step & 1 & 2 & 3 & 4 & 5 \\
\hline
chosen box & 0 & 0 & 2 & 1 & 3 \\
\hline
\end{tabular}
\end{center}
\caption{A history table\label{table:histoy_table}}
\end{table}
\begin{figure}[H]
$$
\begin{array}{c}
\includegraphics[scale=.8]{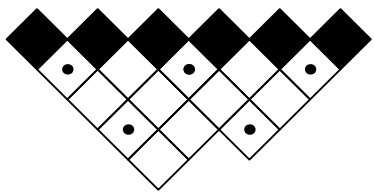}
\end{array}
$$
\caption{Dyck tableaux with history table of Table \ref{table:histoy_table} \label{fig:tableau_de_dyck_value_table}}
\end{figure}
We may see this history table as a path connecting the empty Dyck tableau 
to a Dyck tableau in the generation tree presented in Section \ref{sec:insertion}.
A vector $H\in\N^n$ is a history table if and only if 
$$\forall 1 \le j \le n,\ \ 0 \le H[j] < j.$$
Thanks to Propositions \ref{prop:inverse_de_algorithme_d'insertion} and  \ref{prop:generation_mot_de_dyck_value}, we know that two different Dyck tableaux give two different history tables.
We obtain a template of bijection, since we may choose different  interpretation of the history table as a permutation.

In this paper, we shall use the following interpretation.
Let $\sigma$ be a permutation of size $n$. The \gras{non-inversion table} of $\sigma$ is the table $NI_\sigma \in\N^n$ defined by
$$NI_\sigma[i]=\#\{j'<j:\ \sigma(j')<\sigma(j) \text{ and } \sigma(j)=i \}.$$
As an example, Table \ref{table:histoy_table} is the non-inversion table of the permutation $24153$.
It should be clear that the set of history tables of size $n$ coincides
with the set of non-inversion tables $NI_\sigma$ where $\sigma$
runs through permutations of size $n$.

By interpreting the history table of a Dyck tableau as the non-inversion 
table of a permutation,
we obtain a direct bijection from Dyck tableaux to permutations.
Throughout all this article, we will denote by $\TP$ this bijection.

We will now describe $\TP$ in a more simple and direct way.
Let $\sigma$ be a permutation of size $n$.
We first construct a basement of size $n+1$.
Then, we label all the columns of the basement from left to right with the entries of $\sigma$.
The bijection is obtained by iterating $n$ dot-insertions from the basement.
The $j$-th {\em dot-insertion} consists in the following two steps:
\begin{enumerate}[1)]
\item we add one dotted box in the column with entry $j$;
\item if the new dotted box is to the left of the dotted box added 
during step $j-1$, we add a ribbon between these two boxes. 
\end{enumerate}
Figure \ref{fig:algo_insertion_permutation} shows bijection $\TP$.
\begin{figure}[H]
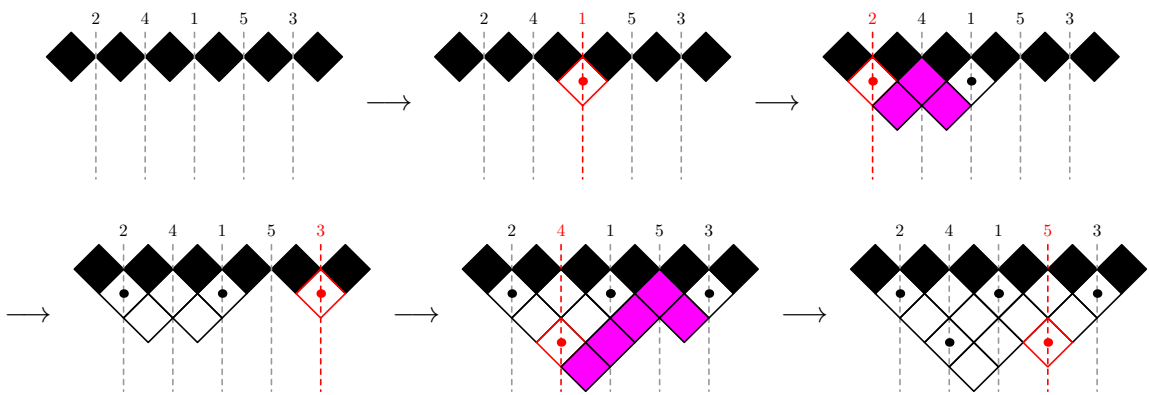

$$
\begin{array}{c}
\includegraphics[scale=.65]{images/bijection_permutations_tableaux_de_dyck_values.1}
\end{array}
\longrightarrow
\begin{array}{c}
\includegraphics[scale=.65]{images/bijection_permutations_tableaux_de_dyck_values.2}
\end{array}
\longrightarrow
\begin{array}{c}
\includegraphics[scale=.65]{images/bijection_permutations_tableaux_de_dyck_values.3}
\end{array}
$$
$$
\longrightarrow
\begin{array}{c}
\includegraphics[scale=.65]{images/bijection_permutations_tableaux_de_dyck_values.4}
\end{array}
\longrightarrow
\begin{array}{c}
\includegraphics[scale=.65]{images/bijection_permutations_tableaux_de_dyck_values.5}
\end{array}
\longrightarrow
\begin{array}{c}
\includegraphics[scale=.65]{images/bijection_permutations_tableaux_de_dyck_values.6}
\end{array}
$$
\caption{Bijection $\TP$ between permutations and Dyck tableaux\label{fig:algo_insertion_permutation}}
\end{figure}

It is important to note that applying bijection $\TP$ to a Dyck tableau 
$T$ gives the same result than interpreting the history table of $T$ as 
a non-inversion table.
To prove this fact, we just have to remark that:
\begin{itemize}
\myitem the insertion procedure can be followed from $\TP$ by removing at each dot-insertion the empty columns;
\myitem if $a(j)$ denotes the number of non empty columns to the left of the column
with entry $j$ after $j$ dot-insertion in bijection $\TP$, then one has:
$$a(j)=NI_{\TP(T)}[j].$$
\end{itemize}

It is clear that bijection $\TP$ is simpler than the insertion procedure, and that it gives a direct and elegant
solution to the problem studied by de Medicis and Viennot in \cite{dMV}, 
but a drawback of $\TP$ is that 
we have to wait the end of the procedure to obtain a Dyck tableau.
It is convenient with the insertion procedure that the current tableau 
during the construction is always a Dyck tableau.

\section{Generalized patterns}
\label{sec:patterns}

The aim of this section is to study some generalized patterns 
in permutations through the use of Dyck tableaux.
To this end, we choose to label the dots in a Dyck tableaux with 
respect to the insertion algorithm: a dotted box inserted at time $j$
gets the label $j$.
We shall consider patterns with constraints on their values, and 
not on their places, which is more usual in the literature. 
A \gras{pattern $2^+2$} of a permutation $\sigma$ is a sub-word 
$ab$ of $\sigma$ such that $a=b+1$.
A \gras{pattern $2^+12$} of a  permutation $\sigma$ is a sub-word 
$abc$ of $\sigma$ such that $b<c$ and $a=c+1$.
A \gras{pattern $1^+21$} of a permutation $\sigma$ is a sub-word 
$abc$ of $\sigma$ such that $a=c+1$ and $b>a$.
For example, in the permutation $2746153$, there are
\begin{itemize}
\myitem $4$ patterns $2^+2$ : $21$, $43$, $65$ and $76$; 
\myitem $3$ patterns $2^+12$: $746$, $615$ and $413$; 
\myitem $5$ patterns $1^+21$: $271$, $241$, $261$, $463$ and $453$.
\end{itemize}

It is more common in the literature (for example in \cite{CN,MJV}) 
to study descents and generalized patterns $31-2$ and $2-31$ 
instead of the patterns $2^+2$, $2^+12$ and $1^+21$ used in the present work.
This presentation is more convenient in our context, and it should be clear
that the map $\sigma\mapsto\sigma^{-1}$ sends our notions of generalized patterns
to the ones in previous works.

\begin{propriete}
\label{prop:bijection_motif_2p2}
The ribbons of a Dyck tableau $T$ are in bijection with the patterns 
$2^+2$ of $\TP(T)$.
Moreover, we get the pattern $2^+2$ itself by reading from left to right
the two labels that are linked by the ribbon.
\end{propriete}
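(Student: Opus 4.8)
The plan is to read everything off the direct, dot-insertion description of $\TP$ recalled just above. Fix a Dyck tableau $T$ and put $\sigma=\TP(T)$; then $T$ is produced from the basement whose columns are labelled, from left to right, by the entries of $\sigma$, by performing the $n$ successive dot-insertions, and the dot inserted at step $j$ carries the label $j$ and sits in the column labelled $j$, i.e.\ in the column whose position in $\sigma$ is $\sigma^{-1}(j)$. First I would record two elementary features of this construction: \emph{(i)} a ribbon is added exactly at those steps $j\in\{2,\dots,n\}$ at which the new dot (dot $j$) is placed to the left of dot $j-1$, and the ribbon added at step $j$ links the dots $j$ and $j-1$, with $j$ on the left; \emph{(ii)} a ribbon addition only raises the heights of the columns it spans and never moves a column, so throughout the construction the left-to-right order of the columns is the order in which the entries occur in $\sigma$. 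Combining these, a ribbon is added at step $j$ if and only if the value $j$ occurs before the value $j-1$ in $\sigma$, i.e.\ if and only if $\sigma^{-1}(j)<\sigma^{-1}(j-1)$.

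Granting (as discussed in the last paragraph) that the ribbons of $T$ are precisely the strips created by these ribbon-addition steps, the statement is then a direct dictionary. By the above, a ribbon is created at step $j$ iff the subword $j\,(j-1)$ of $\sigma$ is a pattern $2^+2$ --- here $a=j$, $b=j-1$, so indeed $a=b+1$. Conversely, any pattern $2^+2$ of $\sigma$ is a subword $a\,b$ with $a=b+1$; setting $j=a$ it equals $j\,(j-1)$, and it occurs precisely when $j$ precedes $j-1$ in $\sigma$, that is, precisely when a ribbon is added at step $j$. Hence the map $j\mapsto$ (the ribbon added at step $j$) is a bijection from the patterns $2^+2$ of $\sigma=\TP(T)$ onto the ribbons of $T$; and reading from left to right the two labels linked by that ribbon yields $j$ then $j-1$, which is the pattern $2^+2$ itself. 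This is exactly the assertion.

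The hard part --- the only step that is not a routine translation --- is to justify the claim granted above: that the ribbons of the final tableau $T$ are in bijection with the ribbon-addition steps of its construction, each step $j$ contributing exactly one ribbon, and that this ribbon still links dot $j$ and dot $j-1$ in $T$. I would argue by induction on the number of dot-insertions performed after step $j$, using the explicit form of a ribbon addition --- it swaps the $U$ just to the right of dot $j$'s column with the $D$ just to the left of dot $j-1$'s column, hence only lifts the path over the columns strictly between them --- together with Proposition \ref{prop:nouvelle_case_egal_case_speciale}: immediately after any step the new dot is the special box, so every subsequent ribbon addition is made against the current special box and is therefore anchored, on its right, at a dot whose label is at least $j$. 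From this one checks, using the precise notion of a ribbon of a Dyck tableau, that a later step neither erases the ribbon created at step $j$, nor merges it with another ribbon, nor changes the pair of dots it links; the ribbons so obtained are then pairwise distinct and exhaust those of $T$, which closes the argument.
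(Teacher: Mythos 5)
Your proposal is correct and follows essentially the same route as the paper: a ribbon is created at step $j$ exactly when dot $j$ lands to the left of dot $j-1$, which happens exactly when $j\,(j-1)$ is a subword of $\sigma$, i.e.\ a pattern $2^+2$. The ``hard part'' you isolate at the end (that the ribbons of the finished tableau are exactly the strips recorded by the ribbon-addition steps, still linking the same pair of dots) is not argued in the paper either --- it is taken as implicit in the definition of a ribbon of a Dyck tableau, the paper's converse direction simply invoking the inverse insertion procedure to remove $a$ immediately before $b$ --- so your extra inductive sketch is a (reasonable, if unfinished) refinement rather than a divergence.
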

As an example, the Dyck tableau on Figure \ref{fig:2p2_ruban} contains 
$4$ ribbons which correspond to the occurences $21$, $43$, $65$ and $76$ 
of the pattern $2^+2$ in the permutation $\sigma=2746153$.
\begin{figure}[H]
$$
\begin{array}{c}
\includegraphics{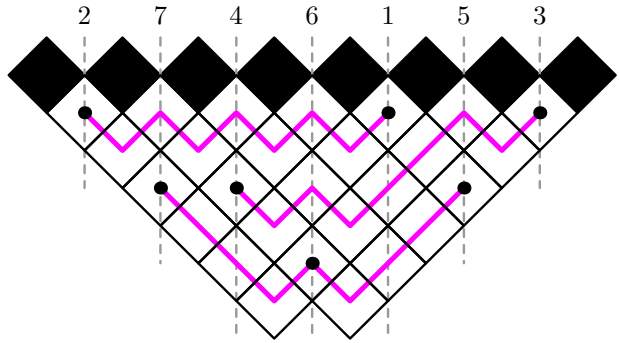}
\end{array}
$$
\caption{The ribbons represent the patterns $2^+2$ in the permutation
\label{fig:2p2_ruban}}
\end{figure}

\begin{proof}
Let $ab$ be a pattern $2^+2$ in the permutation $\sigma=\TP(T)$.
This means that the entry $a$ is inserted in $T$ immediately after $b$,
and on its left, whence there is a ribbon between $a$ and $b$ in $T$.

Conversely, if a ribbon in $T$ links the two dots labeled $a$ and $b$,
the inverse insertion algorithm removes $a$ immediately before $b$. 
Thus $a = b+1$.
\end{proof}

In a Dyck tableau $T$, the boxes placed below a dot (thus in a column)
are called \gras{shadow boxes}.
Figure \ref{fig:case_2p12} illustrates this notion.
\begin{figure}[H]
$$
\begin{array}{c}
\includegraphics{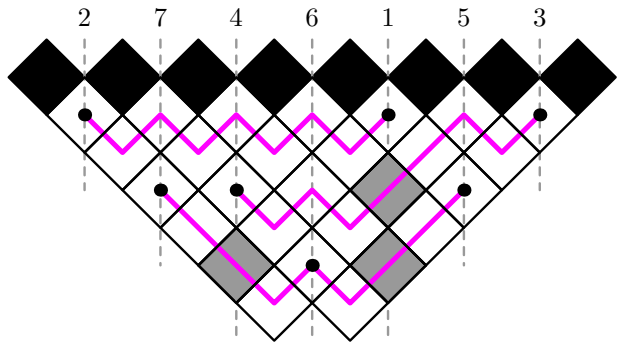}
\end{array}
$$
\caption{Shadow boxes of a tableau\label{fig:case_2p12}}
\end{figure}

Similarly, the boxes placed above a dot (thus in a column)
are called \gras{clear boxes}.
Figure \ref{fig:case_2p32} illustrates this notion.
\begin{figure}[H]
$$
\begin{array}{c}
\includegraphics{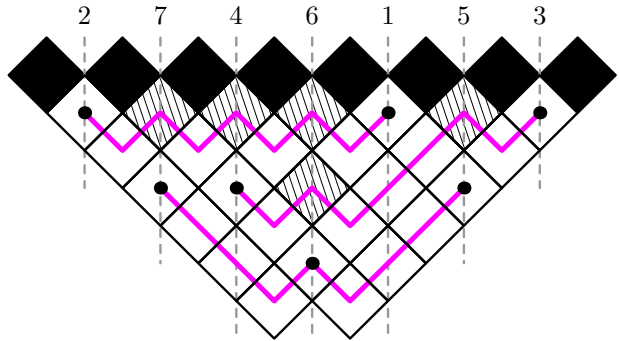}
\end{array}
$$
\caption{Clear boxes of a tableau\label{fig:case_2p32}}
\end{figure}

Every column contains three types of boxes: exactly one dot and 
possibly clear and shadow boxes ({\it cf.} Figure \ref{fig:case_2p12_2p32}).
\begin{figure}[H]
$$
\begin{array}{c}
\includegraphics{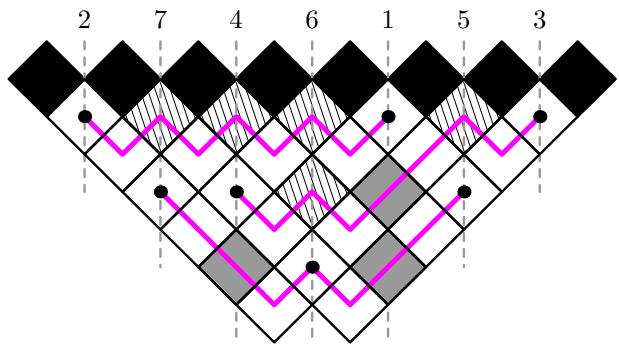}
\end{array}
$$
\caption{Clear and shadow boxes\label{fig:case_2p12_2p32}}
\end{figure}

\begin{propriete}
\label{prop:bijection_motif_2p12_2p32}
Let $T$ be a Dyck tableau and $\sigma=\TP(T)$.
Shadow boxes of $T$ are in bijection with patterns $2^+12$ 
of $\sigma$.
Clear boxes of $T$ are in bijection with patterns $1^+21$ 
of $\sigma$.
\end{propriete}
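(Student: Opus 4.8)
The plan is to read off both bijections from Proposition~\ref{prop:bijection_motif_2p2} together with a simple bookkeeping of \emph{where clear and shadow boxes are born} during the construction of $T$; an inductive variant, using the insertion procedure directly, is sketched at the end.

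\textbf{Provenance of clear and shadow boxes.} First I would record two facts, both immediate from the definitions of column addition and ribbon addition. (i) When a column is created its weight equals $ch-1$, so its dot sits at the very bottom of the column: it is born with exactly $ch-1$ clear boxes above the dot and \emph{no} shadow box. (ii) A ribbon addition increases by one the height of every dotted column it spans while leaving the weight (hence the number of clear boxes) unchanged: it therefore creates exactly one new shadow box, below the dot, in each such column, and never a clear box. Combining (i) and (ii): the shadow boxes of $T$ are in natural bijection with the pairs $(R,C)$ where $R$ is a ribbon of $T$ and $C$ a dotted column spanned by $R$ at the step $R$ is added, while the clear boxes in a column $C$ are in natural bijection with the ribbons added \emph{before} $C$ is born that span the position of $C$.

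\textbf{Identifying the patterns.} Now use Proposition~\ref{prop:bijection_motif_2p2}: a ribbon $R$ corresponds to a pattern $2^+2$ of $\sigma=\TP(T)$, so, reading the two dots it links from left to right, their labels are $c+1$ and $c$ for some $c$; equivalently $R$ is the ribbon produced at the $(c+1)$-st dot-insertion, and it spans exactly those columns whose $\sigma$-entry lies, in $\sigma$, strictly between $c+1$ and $c$. Every such entry is either $<c$ or $>c+1$, and the two cases describe the two kinds of columns $R$ spans: an entry $b<c$ labels a column already dotted at step $c+1$, on which $R$ places a shadow box, and reading the left dot, this dot and the right dot gives $(c+1)\,b\,c$, i.e.\ a pattern $2^+12$; an entry $b>c+1$ labels a column born after step $c+1$, whose birth height gets a $+1$ contribution from $R$ realized as a clear box, and the same reading gives $(c+1)\,b\,c$ with $b>c+1$, i.e.\ a pattern $1^+21$. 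Conversely a pattern $2^+12$ (resp.\ $1^+21$) $abc$ has $a=c+1$, hence points to the ribbon of step $a$ and the column with entry $b$, which that ribbon spans (its position lies between those of $a$ and $c$), while $b<c$ (resp.\ $b>c+1=a$) forces this column to be dotted at step $a$ (resp.\ born after step $a$), so the construction is reversible. This yields the two announced bijections.

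\textbf{Main obstacle and inductive alternative.} The only delicate point is (i)/(ii) together with the identity ``height of a newly created column $=1+\#\{$ribbons of $T$ straddling the chosen basement box$\}$'', which requires unwinding the word operations and keeping the positions of the size-$k$ intermediate tableaux aligned with the positions of the entries $1,\dots,k$ in $\sigma$ — exactly what ``removing empty columns'' does in the description of $\TP$ in Section~\ref{sec:bij-permutations}. Equivalently one may induct on $n$ via Algorithm~\ref{alg:algorithme_insertion}: inserting the dot $n+1$ inserts the value $n+1$ into $\sigma$ so that exactly $k$ entries precede it, where $k$ is the label of the chosen basement box; the only new shadow boxes are those created by the step~\ref{etape_ajouter_ruban} ribbon, and their number is the number of entries lying between $n+1$ and $n$ in $\sigma'$ (necessarily entries $<n$), which is precisely the number of new patterns $2^+12$; the only new clear boxes lie in the new column, their number is its height minus one, and — this is where the bookkeeping enters — it equals the number of patterns $2^+2$ of $\sigma$ of the form $(c+1,c)$ with $c+1$ landing before and $c$ after the inserted value, i.e.\ the number of new patterns $1^+21$.
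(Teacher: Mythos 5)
Your argument is correct and follows essentially the same route as the paper: both identify a shadow (resp.\ clear) box as the intersection of a ribbon --- which Proposition~\ref{prop:bijection_motif_2p2} matches with a pattern $2^+2$ $(c+1)c$ --- with the column of an entry $b$ lying positionally between its endpoints, the two cases being distinguished by whether $b$ is inserted before or after that ribbon. You simply make explicit the bookkeeping (birth height of a column, effect of ribbon addition on heights) that the paper leaves implicit in its one-line treatment of each direction.
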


The Dyck tableau $T$ on Figure \ref{fig:case_2p12_2p32} contains
$5$ clear boxes corresponding to $271$, $241$, $261$, 
$463$, $453$ in the permutation $\sigma=2746153$,
and $3$ shadow boxes corresponding to $746$, $413$, $615$ 
in $\sigma$.

\begin{proof}
Let $abc$ be a pattern $2^+12$ in the permutation $\sigma$.
Thanks to bijection $\TP$, we know that the column with entry $b$ is between the columns of $a$ and $c$.
Proposition \ref{prop:bijection_motif_2p2} implies that the pattern
$2^+2$ corresponds to a ribbon which has been inserted after $b$.
Thus the column of $b$ intersects the ribbon and gives a shadow box.
It is obvious that two different occurences of the pattern $2^+12$
give two distinct shadow boxes: the triples of columns of $a$,
$b$ and $c$ are different.

Conversely, let us consider a shadow box. 
It is the intersection of a column labeled $b$ and a ribbon
of left and right endpoints labeled $a$ and $c$.
It should be clear that $abc$ is an occurence of the pattern $2^+12$.

The proof is the same for the pattern $1^+21$.
\end{proof}
A direct consequence of this proposition is the following corollary.
\begin{corollaire}
Let $\sigma$ be a permutation and $i$ a letter of $\sigma$.
The height of the dot labeled by $i$ in the Dyck tableau $T$
with $\TP(T)=\sigma$ is equal to the number of patterns $1^+21$
in $\sigma$ where $i$ is the entry $2$ in these patterns.
\end{corollaire}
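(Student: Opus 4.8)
The plan is to deduce the statement from Proposition \ref{prop:bijection_motif_2p12_2p32} by unwinding the definitions of \emph{height of a dot} and of \emph{clear box}. First I would observe that the number of clear boxes contained in a fixed column of $T$ is exactly the height of the unique dot of that column: a column carries exactly one dot, the clear boxes of the column are by definition the boxes lying above that dot, and all of them are empty (the column's only dot being the one below), so their number is precisely the quantity called the height of the dot. Next, using the labelling convention of Section \ref{sec:patterns} together with the description of $\TP$ via dot-insertions (and the fact, noted in that section, that $\TP$ agrees with the history-table reading of $T$), the dot labelled $i$ is the dot inserted at step $i$, which by construction sits in the column whose entry is $i$. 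Combining these two remarks, the height of the dot labelled $i$ equals the number of clear boxes in the column with entry $i$ of $T$.

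It then remains to match clear boxes of that column with the right $1^+21$ patterns. For this I would revisit the proof of Proposition \ref{prop:bijection_motif_2p12_2p32}: a clear box is realized there as the intersection of a column with some entry $b$ and a ribbon whose endpoints are labelled $a$ and $c$ with $a=c+1$, and the associated occurrence of $1^+21$ in $\sigma=\TP(T)$ is the subword $abc$, in which $b$ is the entry playing the role of the ``$2$''. (This is exactly the $1^+21$ branch, as opposed to the $2^+12$ branch, of that proof: the box produced by the ribbon is a \emph{clear} box, i.e. lies above the dot of column $b$, precisely when $b>a>c$.) Hence the bijection of Proposition \ref{prop:bijection_motif_2p12_2p32}, restricted to clear boxes lying in the column with entry $i$, is a bijection onto the set of occurrences of the pattern $1^+21$ in $\sigma$ in which $i$ is the entry $2$. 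Putting the two paragraphs together gives the corollary.

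The only step that requires genuine care — and the one I would single out as the main obstacle — is this refinement of Proposition \ref{prop:bijection_motif_2p12_2p32} to a ``column-wise'' statement: one must verify that the clear box associated with a pattern $abc$ indeed lives in the column with entry $b$ (and no other column), and conversely that a clear box of the column with entry $b$ produces a pattern whose middle letter is $b$. This information is essentially already present in the proof of that proposition, so the task is one of bookkeeping rather than of a new argument; the remaining ingredients are purely definitional.
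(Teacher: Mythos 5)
Your proposal is correct and is essentially the argument the paper intends: the paper states the corollary as a ``direct consequence'' of Proposition~\ref{prop:bijection_motif_2p12_2p32} without further proof, and your elaboration (height of a dot equals the number of clear boxes in its column, the dot labelled $i$ sits in the column with entry $i$, and the clear box attached to an occurrence $abc$ of $1^+21$ lies precisely in the column of the middle entry $b$) is exactly the bookkeeping that justifies that claim. The column-wise refinement you flag is indeed already contained in the proof of that proposition, so there is no gap.
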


\section{Bijection between Dyck tableaux and tree-like tableaux}
\label{sec:bij-TLT}

Tree-like tableaux are combinatorial ojects introduced in \cite{TLT}.
A {\em tree-like tableau} $B$ is a Ferrers diagram in which each box 
contains $0$ or $1$ dot (called respectively empty box or dotted box), with the following constraints:
\begin{enumerate}[1)]
\item the top-left-most box is dotted, and called the \gras{root} of $B$;
\item for every non-root dotted box $c$, there exists a dotted box either above $c$ in the same column, or to its left in the same row, {but not both}; 
\item every column and every row has at least one dotted box.
\end{enumerate}
The {\em size} of a tree-like tableau is defined as its number of dots.
Figure \ref{fig:tree-like_tableaux} shows an example of  tree-like tableau of size $9$.
\begin{figure}[H]
$$
\begin{array}{c}
\includegraphics[scale=.65]{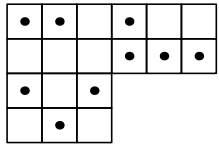}
\end{array}
$$
\caption{A tree-like tableau\label{fig:tree-like_tableaux}}
\end{figure}

An important property is that tree-like tableaux are endowed with a tree structure.
The tree structure can be obtained graphically by drawing two lines from every dot of $B$, one down and one to the right, and stopping them at the boundary.
We get in this way a binary tree with some edges crossings, which occur
necessarly inside boxes of the Ferrers diagram.
These boxes are called \gras{crossing boxes} of the tree-like tableau.
Figure \ref{fig:tree_structure_of_tree-like_tableaux} shows how to extract the tree structure of a tree-like tableau.
\begin{figure}[H]
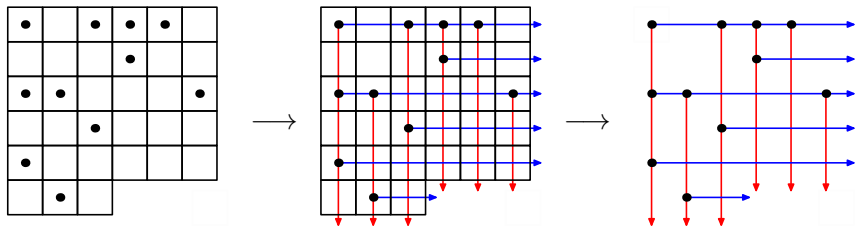

$$
\begin{array}{c}
\includegraphics[scale=.65]{images/tree_structure_of_tree-like_tableaux.1}
\end{array}
\longrightarrow
\begin{array}{c}
\includegraphics[scale=.65]{images/tree_structure_of_tree-like_tableaux.2}
\end{array}
\longrightarrow
\begin{array}{c}
\includegraphics[scale=.65]{images/tree_structure_of_tree-like_tableaux.3}
\end{array}
$$
\caption{The tree structure of a tree-like tableau\label{fig:tree_structure_of_tree-like_tableaux}}
\end{figure}

Like Dyck tableaux, tree-like tableaux are endowed with an insertion procedure \cite{TLT}.
We shall not give here details of this procedure, we just recall that there are $k+1$ positions (the edges placed at the bottom and right boundary) to insert a dotted box in a tree-like tableau of size $k$.
This explains why tree-like tableaux of size $n$ are, as Dyck tableaux of the same size, enumerated by $n!$.
We refer to \cite{TLT} for a detailed presentation of these objects.

Although Dyck tableaux and tree-like tableaux are two different notions which have been designed for different purposes,
their respective recursive structures are based on similar insertion procedures: in both cases, the key ingredients are
the notion of special box together with the ribbon addition which ensures to inversibility of the procedure.
Moreover, we will now explain that we have a canonical bijection between tree-like tableaux and Dyck tableaux.
To do this, we label the edges of any tree-like tableau of size $n$ following the boundary from left to right with integers from $0$ to $n$.
Then we may code a tree-like tableau $B$ with an {\em insertion table}
by recording at each insertion $k$ the label of the edge chosen to insert
the $k$-th dot.
We may see this table as the history table of a Dyck tableau $T$,
that is to say the non-inversion table of $\sigma=\TP(T)$. We get in this
way a bijection between tree-like tableaux $T$ and Dyck tableaux $D$ such that:
\begin{enumerate}[1)]
\item the number of dots in $T$ is the number of dots in $D$;
\item there is a ribbon between dot $j$ and dot $j+1$ in $T$ if and only if there is a ribbon between dot $j$ and dot $j+1$ in $B$ or if dot $j+1$ is the left son of dot $j$ in the tree extracted from the tree-like tableau;
\item crossing boxes of $B$ are in bijection with shadow boxes of $T$.
\end{enumerate}
Figure \ref{fig:bijection_tableaux_boises_tableaux_dyck_values} illustrates
this bijection.
\begin{figure}[H]
$$
\begin{array}{c}
\includegraphics{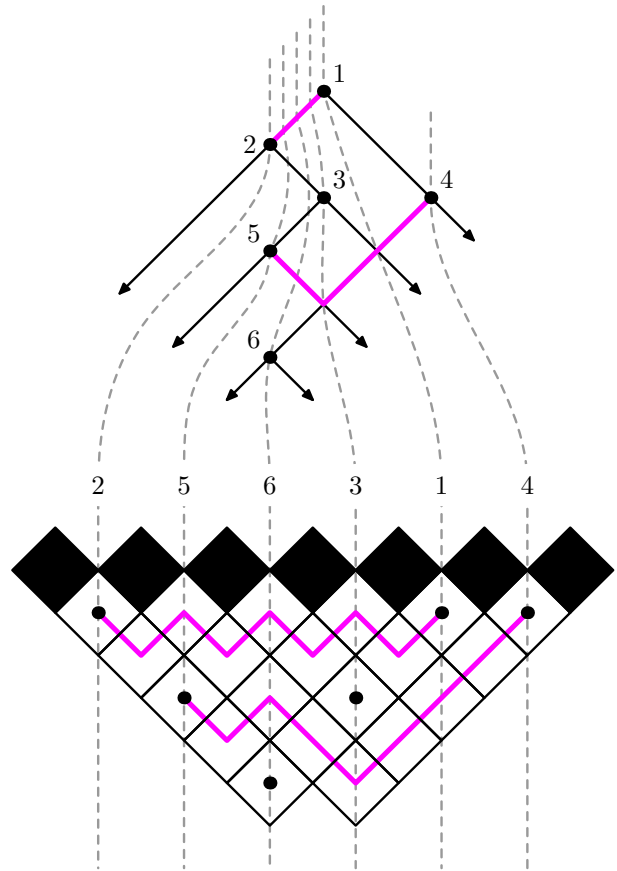}
\end{array}
$$
\caption{Bijection between tree-like tableaux and Dyck tableaux\label{fig:bijection_tableaux_boises_tableaux_dyck_values}}
\end{figure}

\begin{proof}
\begin{enumerate}
\item Evident from the insertion procedure.
\item Let $H$ be the insertion table of a tree-like tableau $B$, thus the history table of the Dyck tableau $T$.
When inserting $j$, we add a ribon between $j$ and $j+1$ in $T$ if and only if $H(j)\ge H(j+1)$.
We add a ribon between $j$ and $j+1$ in a tree-like tableau if and only if $H(j)>H(j+1)$.
Dot $j+1$ is the left son of dot $j$ if and only if $H(j)=H(j+1)$.
\item When inserting $j$ in $B$, the number of crossing boxes added is the length of the ribbon: $max(0,H(j)-H(j+1))$.
It is precisely the number of shadow boxes added for the insertion of $j$ in $T$.
\end{enumerate}
\end{proof}

\section{The shape of a Dyck tableau}
\label{sec:shape}

Thanks to the insertion algorithm, we are able to know the evolution 
of the path at the lower border of the Dyck tableau. In this section,
we are interested in reading this {\em shape} without using the insertion
algorithm, that is to say in reading it directly on the permutation.

We first define the right and left border. Consider the two steps in 
the Dyck path that are at the bottom of a column $i$ 
in the Dyck tableau. The left one is called 
\gras{left border}, the right one \gras{right border} at position $i$.
We denote by $lb(i)$ (\textit{resp.} $rb(i)$) the left 
(\textit{resp.} right) border at position $i$.
Figure \ref{fig:bord_gauche_bord_droit_tableau_dyck_value}
illustrates this notion: on this tableau, $lb(3)=D$
and $rb(3)=U$.
\begin{figure}[H]
$$
\begin{array}{c}
\includegraphics{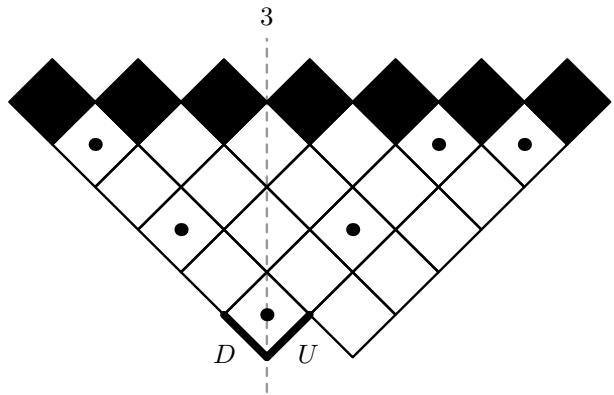}
\end{array}
$$
\caption{Left and right border in a Dyck tableau
 \label{fig:bord_gauche_bord_droit_tableau_dyck_value}}
\end{figure}

\begin{propriete}
\label{prop:invarience_du_bord}
Let $T$ be a Dyck tableau of size $n$ and $\sigma=\TP(T)$.
For a position $1\le i\le n$, $lb(i)$ and $rb(i)$ are known
after that $\sigma(i)+1$ (if exists) is inserted.
\end{propriete}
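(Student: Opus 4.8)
\section*{Proof idea}

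The plan is to follow the construction of $T$ furnished by the bijection $\TP$, read as a sequence of $n$ dot‑insertions (equivalently, the insertion procedure of Section~\ref{sec:insertion} together with the non‑inversion table interpretation of the history table), and to pin down exactly which insertion steps can modify the two border steps at a fixed position. So fix $1\le i\le n$ and set $j=\sigma(i)$, so that the dot occupying column $i$ of $T$ is the one inserted at time $j$. Call that column $c$ and track its left and right border steps all along the construction; they become $lb(i)$ and $rb(i)$ at the end.

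First I would note that $c$, together with its two border steps, is created exactly at time $j$, by the column addition $\bd\longrightarrow\bd\,D\,m\,U\,\bd$: the newly inserted $D$ is the left border step of $c$ and the newly inserted $U$ its right border step. At this same time $j$, Step~\ref{etape_ajouter_ruban} may add a ribbon between the new dot and dot $j-1$ (which, by Proposition~\ref{prop:nouvelle_case_egal_case_speciale}, is the special weight just before this insertion); by the definition of ribbon addition this swaps the right border step of $c$ (a $U$) with the left border step of the column of dot $j-1$ (a $D$). Hence, at the end of time $j$, the right border step of $c$ is $U$ if no ribbon was added and $D$ otherwise, while its left border step is $D$.

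Next I would show that no step after time $j$ can change the right border step of $c$, and that only time $j+1$ can still change its left border step. Indeed, at any time $j'>j$ a column addition merely inserts a new column's block between two existing blocks and, by Proposition~\ref{prop:insertion_colonne_mot_de_dey_value} (which leaves the weights and heights of the existing columns untouched), does not alter any letter of the block of $c$. A ribbon addition performed at time $j'$ lies between dot $j'$ and dot $j'-1$; by Proposition~\ref{prop:ajout_ruban_mot_de_dyck_value} it swaps the right border step of the column created at time $j'$ with the left border step of the column created at time $j'-1$ and otherwise only raises the heights of the columns strictly in between, leaving their border steps alone. Since $j'>j$, the column created at time $j'$ is not $c$, so the right border step of $c$ is never touched again; and the column created at time $j'-1$ is $c$ only if $j'-1=j$, i.e.\ $j'=j+1$, in which case the ribbon — added precisely when the column of dot $j+1$ is inserted to the left of $c$, which by Proposition~\ref{prop:nouvelle_case_egal_case_speciale} is the special column at that moment — turns the left border step of $c$ from $D$ into $U$. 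For $j'>j+1$ we have $j'-1>j$, so the left border step of $c$ is not touched either.

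Putting these observations together yields the statement: $rb(i)$ is already determined after step $\sigma(i)$, and $lb(i)$ is determined after step $\sigma(i)+1$; when $\sigma(i)=n$ there is no step $\sigma(i)+1$ and $lb(i)=D$. In all cases $lb(i)$ and $rb(i)$ are known once $\sigma(i)+1$ (if it exists) has been inserted. I expect the only point that needs careful wording to be the two bookkeeping facts used above — that a column addition does not perturb the border letters of the columns already present, and that a ribbon addition perturbs only the right border step of the newer and the left border step of the older of the two dots it links — but both are immediate from the word‑level descriptions of these operations, so there is no real obstacle, only some care needed to keep "the height of a column" separate from "its two border steps".
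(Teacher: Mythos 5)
Your argument is correct and is essentially the paper's own proof, fleshed out: the paper's one-line observation is exactly your key point, namely that the ribbon possibly added at time $j'$ touches only the right border of the column of dot $j'$ and the left border of the column of dot $j'-1$ (and column additions leave existing border letters alone), so position $i$ with $j=\sigma(i)$ is last affected at time $j+1$. Your additional remark that $rb(i)$ is in fact already fixed after step $\sigma(i)$ is a correct slight refinement, consistent with Proposition~\ref{prop:bord_gauche_droit_tableau_dyck_value}.
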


\begin{proof}
It suffices to observe that a ribbon added between two entries
$j=\sigma(i)$ and $j+1=\sigma(i')$ modifies the left
and right borders only at positions $i$ and $i'$.
\end{proof}

\begin{propriete}
\label{prop:bord_gauche_droit_tableau_dyck_value}
Let $T$ be a Dyck tableau of size $n$, $\sigma=\TP(T)$
and a position $1\le i\le n$. We let $j=\sigma(i)$.

The left border at position $i$ in $T$ is obtained by:
$$
lb(i) =
\left\{
\begin{array}{ll}
D & \text{if } j=n; \\
D & \text{if } j+1 \text{ is to the right of } j \text{ in } \sigma;\\
U & \text{if } j+1 \text{ is to the left of } j \text{ in } \sigma.
\end{array}
\right.
$$
The right border at position $i$ in $T$ is obtained by: 
$$
rb( i ) =
\left\{
\begin{array}{ll}
U & \text{if } j=1;\\
U & \text{if } j \text{ is to the right of } j-1 \text{ in } \sigma;\\
D & \text{if } j \text{ is to the left of } j-1 \text{ in } \sigma.
\end{array}
\right.
$$
\end{propriete}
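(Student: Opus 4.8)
The plan is to prove the formula for $lb(i)$; the formula for $rb(i)$ follows by a symmetric argument (reading the dual border). The main idea is to use Proposition \ref{prop:invarience_du_bord}, which tells us that $lb(i)$ is settled as soon as the dot with label $j+1$ (if it exists) has been inserted. So the strategy is to track the left border at position $i$ through exactly two moments of the insertion procedure: the insertion of the dot labeled $j=\sigma(i)$, and the insertion of the dot labeled $j+1$.

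First I would treat the case $j=n$. Here there is no entry $j+1$, so by Proposition \ref{prop:invarience_du_bord} the left border at position $i$ never changes after the dot $j$ is inserted. When a dotted box is inserted via a column addition, the new column is created as a $DmU$ pattern; in particular, immediately after inserting the dot labeled $j$, the step directly to the left of column $i$ is a $D$. Since no later ribbon addition can touch this step (any such ribbon would need to involve $n$ and $n+1$, and $n+1$ does not exist), we get $lb(i)=D$. This also establishes the base step for the other cases: right after the dot $j$ is inserted, we always have $lb(i)=D$.

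Next I would handle the two remaining cases together, looking at what happens when the dot labeled $j+1$ is inserted (all subsequent insertions, by Proposition \ref{prop:invarience_du_bord}, leave $lb(i)$ untouched). Inserting the dot $j+1$ adds a dotted box in the column with entry $j+1$; by the description of $\TP$ in Section \ref{sec:bij-permutations}, a ribbon is then added between the columns of $j$ and $j+1$ precisely when the column of $j+1$ lies to the left of the column of $j$, i.e.\ when $j+1$ is to the left of $j$ in $\sigma$. If $j+1$ is to the right of $j$, no ribbon is added involving columns $i$ and the column of $j+1$, and the left border at position $i$ stays $D$. If $j+1$ is to the left of $j$, then a ribbon is added spanning from the column of $j+1$ to the column of $j$, and its right endpoint is exactly the step $lb(i)$; a ribbon addition swaps the $UD$ into $DU$, turning that $D$ into a $U$, so $lb(i)=U$. (One should check that the relevant step is indeed the $D$ at the left foot of column $i$, which is immediate from the definition of the dot-insertion: the ribbon is added ``between'' the two new dotted boxes, and its right foot sits at the bottom-left of the column with entry $j$.)

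The main obstacle is the bookkeeping in the last paragraph: one must be careful that the step identified as $lb(i)$ at the moment the dot $j$ is inserted is literally the step that gets flipped by the ribbon addition at the moment $j+1$ is inserted, with no intervening insertion having moved it. Proposition \ref{prop:invarience_du_bord} is exactly what rules out intervening changes, so once that identification is made cleanly, the argument is routine. The formula for $rb(i)$ is entirely analogous, exchanging the roles of $j$ and $j-1$ and of left/right endpoints of ribbons, with $j=1$ playing the role of the boundary case.
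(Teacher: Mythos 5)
Your proof is correct and follows essentially the same route as the paper's: both reduce, via Proposition \ref{prop:invarience_du_bord}, to deciding whether a ribbon ends (for $lb$) or begins (for $rb$) at the dot labeled $j$, which is governed by the relative positions of $j-1$, $j$ and $j+1$ in $\sigma$. You merely spell out the mechanics of the column and ribbon additions (the initial $D\,m\,U$ pattern and the $UD\to DU$ swap) in more detail than the paper's terse argument does.
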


For example, in $\sigma = 24153$, $4=\sigma(2)$ is to the left of $3$ and $5$. 
The left and right borders at position $2$ are both equal to $D$ (see
Figure \ref{fig:algo_insertion_permutation}).

\begin{proof}
Proposition \ref{prop:invarience_du_bord} allows to look only at the entries
 $j-1$, $j$ and $j+1$.
\begin{itemize}
	\myitem If $j=1$, there is no ribbon starting at position $i$ thus $rb( i ) =U$.

\myitem If $j \in [2,n]$, since the right step at position $i$ depends on whether there is a ribbon beginning at $j$, we consider the following: 

\begin{itemize}
	\item either $j$ is to the left of $j-1$, and $rb( i ) =D$, 
	\item or $j$ is to the right of $j-1$, and $rb( i ) =U$. 
\end{itemize}

	\myitem If $j=n$, there is no ribbon ending at position $i$ thus
$lb( i ) = D$

\myitem If $j \in [1,n-1]$, since the left step at position $i$ depends on whether there is a ribbon ending at $j$, we consider the following: 

\begin{itemize}
	\item either $j+1$ is to the right of $j$, and $lb( i )= D$,
	\item or $j+1$ is to the left of $j$, and $lb( i ) =U$.
\end{itemize}
\end{itemize}
\end{proof}

\section{The (RL/LR)-(minima/maxima) of a Dyck tableau}
\label{sec:min-max}

In this section, we want to interpret (RL/LR)-(minima/maxima)
of a permutation directly on its associated Dyck tableau.
Let $\sigma$ be a permutation of size $n$. 
We shall say that $j=\sigma(i)$ is a {\em right-to-left minimum}
(RL-minimum for short) if and only if 
$$\forall i'>i,\ \ \sigma(i)<\sigma(i').$$
We define in the same manner RL-maxima, LR-minima and LR-maxima.
By a slight abuse, we shall talk about (RL/LR)-(minima/maxima)
of a Dyck tableau.

We let $T$ denote a Dyck tableau of size $n$ and $\sigma=\TP(T)$.

\begin{propriete}
\label{prop:rl_minima}
There is a bijection between RL-minima of $\sigma$ and dotted boxes
 in $T$ at height $0$ with a right border equal to $U$.
\end{propriete}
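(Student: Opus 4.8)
The plan is to characterize, in terms of the permutation $\sigma$, exactly which dotted boxes of $T$ have height $0$ and right border $U$, and to show this characterization coincides with "$\sigma(i)$ is a RL-minimum". Write $j=\sigma(i)$ for the entry sitting in column $i$. First I would handle the right border. By Proposition \ref{prop:bord_gauche_droit_tableau_dyck_value}, $rb(i)=U$ precisely when $j=1$ or $j$ lies to the right of $j-1$ in $\sigma$; equivalently, $rb(i)=U$ iff there is no descent-type pattern $2^+2$ of the form "$(j-1)\,j$ reversed", i.e.\ iff $j$ is \emph{not} inserted to the left of $j-1$ immediately after $j-1$. So the "right border $U$" condition says: either $j=1$, or at the moment $j$ is inserted it is not placed to the left of the box just inserted (the box of $j-1$); by Proposition \ref{prop:nouvelle_case_egal_case_speciale} this is exactly the condition that no ribbon is created when $j$ is inserted, equivalently no shadow box is created at step $j$ "over" column $i$ coming from the left.

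Next I would handle height $0$. Using the corollary following Proposition \ref{prop:bijection_motif_2p12_2p32}, the height of the dot labeled $j$ equals the number of patterns $1^+21$ of $\sigma$ in which $j$ is the middle entry (the "$2$"); so height $0$ at column $i$ means there is \emph{no} pattern $abc$ with $b=j$, $a>b$, $c<a-1$? — more precisely, no sub-word $a\,j\,c$ with $a=c+1$ and $j>a$. In other words, there is no pair of entries $a=c+1$ with $c<a$ both equal-to-consecutive such that the column of $j$ lies strictly between the columns of $a$ and $c$ and $j>a$. I would unpack this: height $0$ says that for every $k<j$ (so the pair $(k+1,k)$ forming a $2^+2$), column $i$ is not strictly between columns $\sigma^{-1}(k+1)$ and $\sigma^{-1}(k)$ whenever $\sigma^{-1}(k+1)<\sigma^{-1}(k)$. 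Equivalently: no descent pair of consecutive values straddles position $i$ "from above", i.e.\ there is no $k<j$ with the box of $k+1$ strictly left of column $i$ and the box of $k$ strictly right of column $i$.

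Then I would combine the two conditions and translate the combined statement "no value $k<j$ appears to the right of column $i$ while $k+1$ does not appear to the right of column $i$ (weakly, allowing $k+1=j$ in column $i$ for the border part)" into "every value smaller than $j$ lies to the left of position $i$", which is exactly "$j$ is a RL-minimum". The cleanest way is probably: show directly that $j$ is a RL-minimum iff for all $k<j$ the position of $k$ is $<i$; then observe that the failure of this is witnessed by a smallest "bad" $k<j$ sitting to the right of $i$, and that for such a minimal witness $k$, the value $k+1$ is either $j$ itself (in column $i$) — this kills the right border, giving $rb(i)=D$ — or $k+1$ lies to the left of $i$ (by minimality $k+1$ is not a bad value $<j$, and $k+1\le j$), giving a $1^+21$ pattern with $j$ as middle entry, hence positive height. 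Conversely if $j$ is a RL-minimum one checks easily both that $rb(i)=U$ (no $k<j$ to the right, in particular $j-1$ to the left or $j=1$) and that the height is $0$ (any $1^+21$ with middle $j$ would need an entry $c<j$ to the right of $i$). The injectivity/bijectivity is then automatic since $i\mapsto j=\sigma(i)$ is a bijection on positions and we have matched the two subsets of positions.

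\textbf{Main obstacle.} The delicate point is the bookkeeping in the "only if" direction: given that column $i$ is not a RL-minimum, one must produce from the minimal bad witness $k$ \emph{either} a violation of the right-border condition \emph{or} a $1^+21$ pattern making the height positive, and argue these cases are exhaustive. This hinges on the minimality of $k$ forcing $k+1\le j$ and $k+1$ to the left of position $i$ (unless $k+1=j$), which in turn uses the precise statements of Proposition \ref{prop:bord_gauche_droit_tableau_dyck_value} and the corollary to Proposition \ref{prop:bijection_motif_2p12_2p32}. Everything else is a routine unwinding of those two results.
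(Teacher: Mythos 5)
Your overall route is the same as the paper's: combine the border characterization of Proposition \ref{prop:bord_gauche_droit_tableau_dyck_value} (so that $rb(i)=U$ iff $j=1$ or $j-1$ is to the left of $j$) with the interpretation of the height of the dot labeled $j$ as the number of $1^+21$ patterns having $j$ as middle entry, and then show that the conjunction of these two conditions is equivalent to every value smaller than $j$ lying to the left of position $i$, i.e.\ to $j$ being a RL-minimum. Your forward direction (RL-minimum implies height $0$ and $rb(i)=U$) is correct and matches the paper's.

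There is, however, a concrete flaw in your converse as written: you take the \emph{smallest} bad value $k<j$ sitting to the right of position $i$ and claim that "by minimality $k+1$ is not a bad value". Minimality of $k$ only controls the values below $k$; the value $k+1$ may perfectly well also be bad, in which case neither of your two cases ($k+1=j$, or $k+1$ to the left of $i$) applies, and your case split is not exhaustive. For instance, if $j=5$ sits at position $i$ with $3$ and $4$ both to its right and $1,2$ to its left, the smallest bad value is $k=3$, but $k+1=4$ is still to the right of $i$, so this witness yields neither a $1^+21$ pattern with middle entry $j$ nor a violation of $rb(i)=U$. The fix is immediate: take the \emph{largest} bad $k$ (equivalently, argue by downward induction starting from $j-1$). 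Then either $k=j-1$, which forces $rb(i)=D$, or $k+1<j$ is not bad, hence lies to the left of $i$, and $(k+1)\,j\,k$ is a $1^+21$ pattern with $j$ in the middle, forcing positive height. With that correction your argument is complete and is essentially a more detailed version of the paper's short proof, which asserts the same downward chain ("since there is no ribbon above $j$, all entries smaller than $j$ are to the left of $j$") without spelling out the induction.
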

Figure \ref{fig:rl_minima_tableaux_dyck_value} shows this proposition
for $\sigma = 3 1 4 2 8 5 7 6$ whose RL-minima 
are $6$, $5$, $2$ et $1$. 
\begin{figure}[H]
$$
\begin{array}{c}
\includegraphics{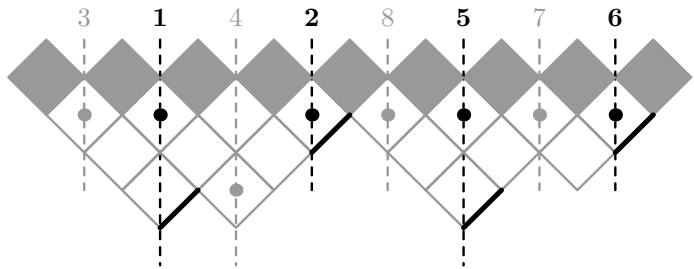}
\end{array}
$$
\caption{RL-minima of a Dyck tableau\label{fig:rl_minima_tableaux_dyck_value}}
\end{figure}
\begin{proof}
First observe that a RL-minimum $j$ can not have a ribbon above it 
(this would imply a smaller entry to its right), thus its height is $0$.
And since $j-1$ has to be to its left, $rb(j)=U$.

Conversely, let $j$ be an entry corresponding to a dotted box
at height $0$ with $rb(j)=U$. This implies that $j-1$ is to
the left of $j$. Since there is no ribbon above $j$, all entries smaller 
than $j$ are to the left of $j$.
\end{proof}

\begin{propriete}
\label{prop:lr_maxima}
There is a bijection between LR-maxima of $\sigma$ and dotted boxes
 in $T$ with maximal height and with a left border equal to $D$.
\end{propriete}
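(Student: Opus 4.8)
The plan is to describe, directly in terms of $\sigma$, those dotted boxes of $T$ having maximal height and left border $D$, and to check that this describes exactly the LR-maxima of $\sigma$. The argument will parallel the proof of Proposition~\ref{prop:rl_minima}: I would use Proposition~\ref{prop:bord_gauche_droit_tableau_dyck_value} to read off the left border, and Proposition~\ref{prop:bijection_motif_2p12_2p32} to read off the height. Write $j=\sigma(i)$ for the entry labelling the column considered. First I would record the two translations I need. The dot of the column labelled $j$ has maximal height exactly when there is no box \emph{below} it, which by Proposition~\ref{prop:bijection_motif_2p12_2p32} means there is no occurrence $abc$ of the pattern $2^+12$ in which $j$ is the middle entry $b$. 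And by Proposition~\ref{prop:bord_gauche_droit_tableau_dyck_value}, the left border at that column is $D$ exactly when $j=n$ or $j+1$ occurs to the right of $j$ in $\sigma$.

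Next I would treat the two directions. If $j$ is an LR-maximum, then every entry larger than $j$ lies to the right of $j$, so the left border is $D$; moreover an occurrence $abc$ of $2^+12$ with middle entry $j$ would need $a=c+1>j$ lying to the \emph{left} of $j$, which is impossible, so the dot of column $j$ has no box below it and hence maximal height. Conversely, assume the dot of column $j$ has maximal height and left border $D$. If $j=n$ then $j$ is trivially an LR-maximum; otherwise $j+1$ lies to the right of $j$, and, supposing for contradiction that some entry larger than $j$ lies to the left of $j$, I would take the smallest such entry $b$ — so that $b\ge j+2$ — note that by minimality $b-1$ (still larger than $j$) must lie to the right of $j$, and observe that $b,j,b-1$ then occur in this order in $\sigma$ and form an occurrence of $2^+12$ with middle entry $j$, i.e.\ a box below the dot of column $j$, contradicting maximal height. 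Hence every entry larger than $j$ lies to the right of $j$, so $j$ is an LR-maximum. Since each column carries exactly one dot, the map sending the entry $j$ to the dot of its column restricts to the desired bijection between LR-maxima and qualifying dotted boxes.

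The only point needing genuine care is the bookkeeping in the converse: one must verify that the extremal choice of $b$ really gives an occurrence of $2^+12$ — that the three entries appear in the order $b,j,b-1$ along $\sigma$, and that $j<b-1$, which is exactly where $b\ge j+2$ (equivalently, ``$j+1$ is already to the right of $j$'') enters. The rest is a direct translation through Propositions~\ref{prop:bord_gauche_droit_tableau_dyck_value} and~\ref{prop:bijection_motif_2p12_2p32}.
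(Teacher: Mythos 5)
Your proof is correct and follows essentially the same route as the paper, which simply invokes ``the same argument as for RL-minima'': translate \emph{maximal height} as ``no ribbon/shadow box below the dot'' and \emph{left border $D$} as ``$j=n$ or $j+1$ to the right of $j$'', then argue both directions. The only difference is that you make explicit, via the extremal choice of $b$ and the pattern $2^+12$, the step that the paper leaves implicit (namely why the absence of boxes below the dot forces \emph{all} larger entries to lie to the right), and this bookkeeping is carried out correctly.
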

\begin{proof}
Exactly the same argument as Proposition \ref{prop:rl_minima}.
\end{proof}

For the next two results, we need to identify the entries $1$ and $n$
on the Dyck tableau.
\begin{propriete}
\label{prop:max}
The entry $n$ corresponds to the rightmost dotted box with maximal 
height and a left border equal to $D$. 
The entry $1$ corresponds to the leftmost dotted box with height $0$
and a right border equal to $U$. 
\end{propriete}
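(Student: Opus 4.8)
The plan is to reduce the statement to Propositions~\ref{prop:rl_minima} and~\ref{prop:lr_maxima}, using only two elementary facts about a permutation $\sigma$: that $n$ is always an LR-maximum and $1$ is always an RL-minimum, and that both the LR-maxima and the RL-minima have values that increase when one reads $\sigma$ from left to right.

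First I would record the bookkeeping fact that makes the translation between $T$ and $\sigma$ work. In the tableau $T$ with $\TP(T)=\sigma$, the $i$-th column (read from left) carries the entry $\sigma(i)$: the columns labeled by the entries of $\sigma$ at the start of the $\TP$-procedure are never reordered, and neither column additions nor ribbon additions move a dot from one column to another. Hence the dotted box attached to the entry $j$ lives in the column at position $\sigma^{-1}(j)$, and the left-to-right order of any family of dotted boxes in $T$ coincides with the left-to-right order of the corresponding entries in the one-line word of $\sigma$. In particular ``rightmost box'' and ``leftmost box'' translate verbatim into ``rightmost entry'' and ``leftmost entry''.

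Next, for the claim about $n$: Proposition~\ref{prop:lr_maxima} is an equivalence, so the dotted boxes of $T$ with maximal height and left border $D$ are \emph{exactly} the boxes attached to the LR-maxima of $\sigma$. If $\sigma(i)$ and $\sigma(i')$ are both LR-maxima with $i<i'$, the defining inequality of the LR-maximum $\sigma(i')$ applied to the index $i$ gives $\sigma(i)<\sigma(i')$; thus the LR-maxima have strictly increasing values read from left to right, so the rightmost LR-maximum is the one of largest value. Since $n$ is the largest entry of $\sigma$ it is trivially an LR-maximum, hence it is that rightmost one, and by the translation above its dotted box is the rightmost dotted box of $T$ with maximal height and left border $D$. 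The statement about $1$ is the mirror image: by Proposition~\ref{prop:rl_minima} the dotted boxes of height $0$ with right border $U$ are exactly the boxes attached to the RL-minima of $\sigma$; if $\sigma(i),\sigma(i')$ are RL-minima with $i<i'$ then the defining inequality of $\sigma(i)$ applied to $i'$ gives $\sigma(i)<\sigma(i')$, so the RL-minima have increasing values from left to right, and $1$, being the smallest entry, is an RL-minimum and therefore the leftmost one, so its box is the leftmost height-$0$ box with right border $U$.

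I do not expect a genuine obstacle here; the proof is essentially a two-line deduction once the earlier propositions are available. The only points needing care are keeping the statements of Propositions~\ref{prop:rl_minima} and~\ref{prop:lr_maxima} as equivalences — so that no box of the right local shape is overlooked when identifying the extreme one — and correctly matching the left-to-right order of boxes in $T$ with the left-to-right order of entries in $\sigma$, which is exactly the column-labeling observation made in the second paragraph.
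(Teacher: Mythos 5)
Your proof is correct. It differs from the paper's in packaging rather than in substance, but the difference is worth noting. The paper proves Proposition~\ref{prop:max} directly from the ribbon mechanics: it first checks that the entry $n$ has no ribbon below it (maximal height) and no ribbon ending at it ($lb=D$), and then shows by hand that if $j>j'$ with $j$ to the left of $j'$, the box of $j'$ fails one of the two conditions (either $lb(j')=U$ or there is a ribbon under $j'$), so among qualifying boxes the greatest entry is the rightmost. You instead observe that the qualifying boxes are \emph{exactly} the LR-maxima by Proposition~\ref{prop:lr_maxima}, that LR-maxima have increasing values from left to right, and that $n$ is trivially an LR-maximum, hence the rightmost one; similarly for $1$ via Proposition~\ref{prop:rl_minima}. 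Your route is more modular and avoids re-deriving the local box conditions, at the cost of leaning on the earlier propositions being genuine set equalities (which they are: their proofs identify the box of each extremum with its own column). The paper's version is self-contained and, in effect, reproves the converse direction of Proposition~\ref{prop:lr_maxima} inline. Your preliminary remark that the dot of entry $j$ sits in column $\sigma^{-1}(j)$, so that left-to-right order of boxes matches left-to-right order of entries in the one-line word, is the right thing to make explicit; the paper uses it silently.
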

\begin{proof}
Since there is no ribbon under the dotted box corresponding  to entry $n$, 
its height is maximal. There is also no ribbon on its left, thus 
its left border is $D$.
Now, if two entries $j$ and $j'$
are such that $j>j'$ and $j$ to the left of $j'$,
we have either $lb(j')=U$ (if $j'+1$ is to the left of $j'$)
or at least one ribbon under $j'$ (if $j'+1$ is to the right of $j'$).
This implies that among points with maximal height
and with a left border equal to $D$,  the greatest is the rightmost.

The argument is the same for the entry $1$.
\end{proof}

It should be clear that for any $\sigma$, $n$ is a RL-maximum and
$1$ a LR-minimum.
 
\begin{propriete}
\label{prop:rl_maxima}
There is a bijection between RL-maxima of $\sigma$ different from $n$ 
and dotted boxes in $T$ with maximal height, with a left border equal to $U$
and placed to the right of $n$.
\end{propriete}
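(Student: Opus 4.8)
The plan is to combine Proposition \ref{prop:rl_minima}-style reasoning with the characterization of the entry $n$ from Proposition \ref{prop:max}, via the observation that RL-maxima other than $n$ are exactly the RL-minima of the ``reversed'' problem restricted to the suffix of $\sigma$ lying to the right of $n$. First I would recall that, by Proposition \ref{prop:invarience_du_bord}, the left border at position $i=\sigma^{-1}(j)$ is determined as soon as $j+1$ is inserted (when $j<n$), and by Proposition \ref{prop:bord_gauche_droit_tableau_dyck_value} we have $lb(i)=U$ precisely when $j+1$ lies to the left of $j$ in $\sigma$. Likewise, the dot labelled $j$ sits at maximal height exactly when no ribbon passes below it, i.e.\ when every entry larger than $j$ that lies to the left of $j$ is, in fact, ``blocked'' — more precisely, when $j$ is a RL-maximum (any $j'>j$ to the right of $j$ would have forced a ribbon under $j$ at the moment $j'$ was inserted, since $j'$ is inserted after $j$ and, being to the right, would push a ribbon rightward over column $j$). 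So: dots at maximal height $\leftrightarrow$ RL-maxima of $\sigma$.

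Next I would establish the ``to the right of $n$'' condition. By Proposition \ref{prop:max}, $n$ corresponds to the rightmost dotted box with maximal height and left border $D$. A RL-maximum $j\neq n$ must have $n$ to its left (otherwise $n$ would be to its right and $j$ could not be a RL-maximum), hence its column is to the right of the column of $n$. Conversely, a dot of maximal height placed to the right of $n$ is a RL-maximum lying to the right of $n$. So among the maximal-height dots, those strictly to the right of $n$'s column are exactly the RL-maxima other than $n$. It then remains to split these according to their left border. A maximal-height dot $j$ to the right of $n$ has $lb(j)=D$ iff $j+1$ is to its right or $j=n$; since $j\neq n$ and $j$ is a RL-maximum, $j+1$ to the right of $j$ would contradict RL-maximality (as $j+1>j$). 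Hence every RL-maximum $j\neq n$ to the right of $n$ has $j+1$ to its left, i.e.\ $lb(j)=U$. Therefore all maximal-height dots to the right of $n$'s column automatically have left border $U$, and the correspondence $j\mapsto(\text{its dotted box})$ is the desired bijection.

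The reverse direction needs one more check: given a dotted box of maximal height, with $lb=U$, strictly to the right of the column of $n$, I must argue its label $j$ is a RL-maximum. Maximal height means no ribbon below, so no entry larger than $j$ lies to the right of $j$ in $\sigma$ (such an entry, inserted after $j$, would cross column $j$ from the right and leave a ribbon, hence a shadow box, below it — by Proposition \ref{prop:bijection_motif_2p12_2p32} this would lower the dot). Thus $j$ is a RL-maximum; being strictly right of $n$'s column, $j\neq n$. This closes the loop.

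The main obstacle I anticipate is making precise the claim ``maximal height $\iff$ RL-maximum'': one must carefully track, through the insertion procedure, that a later-inserted larger entry sitting to the right forces (and only such an entry forces) a ribbon passing under column $j$, which is exactly the content already packaged in Propositions \ref{prop:bijection_motif_2p2} and \ref{prop:bijection_motif_2p12_2p32} (shadow boxes $\leftrightarrow$ patterns $2^+12$); invoking those cleanly, rather than re-deriving the insertion dynamics, is the key to keeping the argument short. Everything else is a routine case analysis on the position of $j+1$ relative to $j$, handled by Proposition \ref{prop:bord_gauche_droit_tableau_dyck_value}.
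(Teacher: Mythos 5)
Your overall architecture (RL-maximum $\Rightarrow$ the three properties, and conversely) matches the paper's, and your observation that the condition $lb=U$ is automatic for maximal-height dots to the right of $n$ is a correct bonus remark. But there is a genuine gap: the biconditional you announce at the end of your first paragraph, ``dots at maximal height $\leftrightarrow$ RL-maxima of $\sigma$,'' is false, and your converse direction leans on it. A dot has maximal height iff no ribbon passes below it, i.e.\ (by Proposition \ref{prop:bijection_motif_2p12_2p32}) iff $j$ is not the middle letter of any pattern $2^+12$; this requires some $c>j$ to the right of $j$ \emph{whose successor $c+1$ lies to the left of $j$}. The mere presence of $j'>j$ to the right of $j$ does not create a ribbon under $j$ --- your parenthetical ``such an entry \dots would cross column $j$ from the right and leave a ribbon below it'' is not what Proposition \ref{prop:bijection_motif_2p12_2p32} says. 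Concretely, in $\sigma=132$ the entry $1$ sits in the leftmost column, nothing can ribbon over it, so it is at maximal height, yet it is not a RL-maximum. (More generally $\sigma(1)$ is always at maximal height.)

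The fix is exactly where the hypothesis ``to the right of $n$'' must be spent, and you never spend it there: suppose the dot $j$ has maximal height and lies to the right of $n$, and suppose some entry larger than $j$ were to its right. Let $c$ be the \emph{largest} such entry; since $n$ is to the left of $j$ we have $c<n$, so $c+1$ exists, and by maximality of $c$ it is not to the right of $j$, hence $(c+1)\,j\,c$ is a pattern $2^+12$ and Proposition \ref{prop:bijection_motif_2p12_2p32} puts a shadow box under $j$, contradicting maximal height. With that argument inserted, your proof closes and coincides in substance with the paper's (whose own phrasing, ``since $n$ is to its left, any entry greater than $j$ has to be to its left also,'' is compressing precisely this step). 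The forward direction and the positional claim that RL-maxima other than $n$ lie to the right of $n$ are fine as you wrote them.
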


\begin{proof}
If $j$ is a RL-maximum, then
\begin{itemize}
\myitem there is no ribbon under $j$, thus it is at maximal height; 
\myitem $j+1$ is to the left of $j$, thus $lb(j)=U$;
\myitem $j$ has to be to the right of $n$.
\end{itemize}
Conversely, if $j$ satisfies the three previous properties,
$j+1$ has to be to the left of $j$, there is no ribbon 
under $j$ and since $n$ is to its left, any entry 
greater than $j$ has to be to its left also.
\end{proof}

\begin{propriete}
\label{prop:lr_minima}
There is a bijection between LR-minima of $\sigma$ different from $1$ 
and dotted boxes in $T$ with height $0$, with a right border equal to $D$ 
and placed to the left of $1$.
\end{propriete}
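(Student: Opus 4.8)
The plan is to establish, for each entry $j\neq 1$ of $\sigma$, the equivalence: $j$ is an LR-minimum if and only if the dot labeled $j$ in $T$ has height $0$, has right border $D$, and is placed to the left of the dot labeled $1$. Granting this, the map sending an LR-minimum $j\neq1$ to the dot labeled $j$ is the desired bijection, since it is patently injective and, by the equivalence, its image is exactly the set of dots described in the statement. This is the mirror of Proposition~\ref{prop:rl_maxima} (under reversing positions and complementing values, which swaps maximal height with height $0$, left border $U$ with right border $D$, and ``to the right of $n$'' with ``to the left of $1$''), so the two implications will run along the same lines as there.

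For the forward implication, suppose $j=\sigma(i)$ is an LR-minimum with $j\neq1$; then every entry to the left of $j$ in $\sigma$ exceeds $j$. In particular $j-1$ lies to the right of $j$, so $j$ is to the left of $j-1$ and Proposition~\ref{prop:bord_gauche_droit_tableau_dyck_value} gives $rb(i)=D$; likewise $1<j$ forces $1$ to the right of $j$, that is, the dot labeled $j$ is to the left of the dot labeled $1$. Finally, by the corollary of Proposition~\ref{prop:bijection_motif_2p12_2p32}, the height of the dot labeled $j$ equals the number of patterns $1^+21$ of $\sigma$ in which $j$ plays the role of the middle entry; each such pattern would require an entry $a<j$ occurring to the left of $j$, which is impossible, so the height is $0$.

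For the converse, suppose the dot labeled $j$ has height $0$ and is placed to the left of the dot labeled $1$ (this already forces $j\neq1$, since $j$ cannot lie to the left of itself). I will show that every entry to the left of $j$ exceeds $j$, which is precisely the statement that $j$ is an LR-minimum. If this fails, let $m$ be the smallest entry occurring to the left of $j$, so $m<j$; since $1$ lies to the right of $j$ we have $m\neq 1$, hence $m\ge 2$, and $m-1$, being smaller than $m$, cannot lie to the left of $j$, so it lies to its right. Then $m,\ j,\ m-1$ occur in this order in $\sigma$ with $m=(m-1)+1$ and $j>m$, so they form a pattern $1^+21$ with $j$ in the middle; by the corollary of Proposition~\ref{prop:bijection_motif_2p12_2p32} the dot labeled $j$ then has height at least $1$, a contradiction. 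Hence $j$ is an LR-minimum, and then $rb(i)=D$ follows exactly as in the forward implication, so the three conditions are consistent.

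The argument is essentially routine once this bookkeeping is in place; I do not expect a genuine obstacle. The one point to watch is that the hypothesis \emph{``placed to the left of $1$''} is used in an essential way in the converse: it is exactly what excludes the case $m=1$, which is the only way a dot of height $0$ could otherwise fail to be an LR-minimum. (One could instead try to deduce the statement formally from Proposition~\ref{prop:rl_maxima} via the symmetry above, but that symmetry has no evident avatar on Dyck tableaux, so the direct argument is cleaner.)
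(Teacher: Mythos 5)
Your proof is correct and follows essentially the same route as the paper, which simply mirrors its argument for Proposition~\ref{prop:rl_maxima}: the forward direction reads off the three properties from the LR-minimum condition, and the converse uses the absence of clear boxes above $j$ (equivalently, of $1^+21$ patterns with $j$ in the middle) together with the position of $1$ to force every smaller entry to the right of $j$. Your version is merely more explicit, in particular in spelling out the minimal-counterexample step via the corollary of Proposition~\ref{prop:bijection_motif_2p12_2p32}.
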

\begin{proof}
Same as Proposition \ref{prop:rl_maxima}.
\end{proof}

\section{Open questions and forthcoming work}
\label{sec:open}

As explained in the Introduction, a great motivation for this work
is to try to find new objects and statistics that allow a simpler
interpretation of the stationary distribution of the PASEP model
in full generality. 
This general model depends on five parameters 
$\alpha,\beta,\gamma,\delta,q$.
Corteel and Williams \cite{CW2} have given an 
explicit description of this distribution in terms of statistics
defined on staircase tableaux. The set $ST_N$ of staircase tableaux 
of size $N$
is partitioned into subsets $ST_N^\tau$, for $\tau$ any of the $2^N$
states of the model. For each staircase tableau $\T$,
Corteel and Williams introduce a weight $\w(\T)$ in the five parameters
$\alpha,\beta,\gamma,\delta,q$, then define 
$$Z_N^\tau=\sum_{ST_N^\tau}\w(\T)\ \ {\rm and}\ \ Z_N=\sum_{ST_N}\w(\T)$$
and prove that the stationary probability of a state $\tau$
is given by the quotient $Z_N^\tau/Z_N$.
A disadvantage of this description is that the definition of the weight
$\w$ is really complicated, in particular its dependence to the parameter
$q$. As a consequence, it seems out of reach to use this interpretation
to try to define a Markov chain directly on the combinatorial objects
(staircase tableaux) which should project to the PASEP.
Such a Markov chain has been obtained by Duchi and Schaeffer \cite{DS}
for $\gamma=\delta=q=0$ and more generally by Corteel and Williams 
\cite{CW-M} for $\gamma=\delta=0$.

In this perspective, the results of the present paper may be seen as
a first step: we may use Dyck tableaux
to describe the stationary probability of the PASEP for the special
case where $\gamma=\delta=0$. In this case, 
a formula for $Z_N^\tau(\alpha,\beta,0,0,q)$ was obtained in \cite{CW1},
then interpreted in terms of statistics on permutations in \cite{CN,MJV}.
These results may be translated as:
$$Z_N^\tau(\alpha,\beta,0,0,q) = \sum \w'(T)$$
where the sum is over a subset of Dyck tableaux
(which corresponds to the state $\tau$, 
and for which we have an explicit description)
and 
$$\w'(T)=\alpha^{-l(T)}\,\beta^{-r(T)}\,q^{s(T)}$$
with $l(T)$ the number of LR-minima of $T$,
$r(T)$ the number of RL-minima of $T$,
and $s(T)$ the number of shadow boxes in $T$.
Moreover, we have a conjectural description in the case
$\delta=0$ and $\beta=1$ in terms of Dyck tableaux, but there is 
still work to be done.

\bigskip
\noindent{\bf Acknowledgment.} The authors are very grateful to Xavier Viennot
for many useful explanations and comments, and for his interest in this work.
They also thank the anonymous referees for their valuable remarks.

\newpage

\end{document}